\documentclass[12pt]{article}
\usepackage{amsmath}
\usepackage{amsthm}
\usepackage{amscd}
\usepackage{amssymb,amsfonts}
\usepackage[alphabetic,nobysame]{amsrefs}
\usepackage{xcolor}
\usepackage{tikz}
\usepackage{tikz-cd}

\usepackage{hyperref}
\usepackage{latexsym}
\usepackage{todonotes}
\usepackage{nicefrac}
\usepackage{epsfig}
\usepackage{stmaryrd}
\usepackage{setspace}
\usepackage{enumerate}
\usepackage[all]{xypic}
\usepackage{bbm,ifpdf,tikz}
\usepackage{verbatim}
\ifpdf
\usepackage{pdfsync}
\fi

\newtheorem{theorem}{Theorem}[section]
\newtheorem*{theorem*}{Theorem}

\newtheorem{proposition}[theorem]{Proposition}
\newtheorem{corollary}[theorem]{Corollary}

\newtheorem{thmab}{Theorem}

\theoremstyle{definition}
\newtheorem{definition}[theorem]{Definition}
\newtheorem{example}[theorem]{Example}

\newtheorem{remark}[theorem]{Remark}

\newcommand{\excise}[1]{}

\newcommand{\Tor}{\operatorname{Tor}}

\renewcommand{\dim}{\operatorname{dim}}

\renewcommand{\and}{\qquad\text{and}\qquad}

\newcommand{\Hom}{\operatorname{Hom}}

\newcommand{\into}{\hookrightarrow}

\newcommand{\Z}{\mathbb{Z}}
\newcommand{\Q}{\mathbb{Q}}

\newcommand{\C}{\mathbb{C}}
\newcommand{\R}{\mathbb{R}}

\newcommand{\F}{\mathbb{F}}

\newcommand{\FI}{\operatorname{FI}}
\newcommand{\OI}{\operatorname{OI}}

\newcommand{\Aut}{\operatorname{Aut}}

\DeclareMathOperator{\Co}{\mathsf{Co}} 
\DeclareMathOperator{\Cot}{\mathsf{Cot}} 

\newcommand{\PCot}{\operatorname{\mathsf{PCot}}}
\newcommand{\cM}{\mathcal{M}}
\DeclareMathOperator{\Mod}{\mathsf{Mod}}

\DeclareMathOperator{\Set}{\mathsf{Set}}
\DeclareMathOperator{\Vect}{\mathsf{Vect}}
\newcommand{\tea}[1]{\textcolor{violet}{[Tea: #1]}} 

\title{Universality in the algebra and topology of cographs}
\author{Adityo Mamun, Jonathan Nalikka, and Eric Ramos}

\begin{document}

\maketitle

\begin{quote}
    \centering{In Memory of Jonathan Nalikka}
\end{quote}

{\small
\begin{quote}
\noindent {\em Abstract.}
    A finite simple graph $G$ is called a cograph if it does not contain the path on four vertices $P_4$ as an induced subgraph. It is classically known that the family of cographs are well-quasi-ordered by the induced subgraph relation \cite{D}. In preceding work of Knudsen and the third author \cite[Theorem 7.2]{KR}, it was shown that this well-quasi-order statement admitted a categorification, which allowed those authors to prove universal finite generation statements about the homology groups of configuration spaces on cographs \cite[Theorem 1.5]{KR}. In this work, we expand \cite[Theorem 7.2]{KR} to be compatible with the family of polynomial rings on the vertex sets of cographs. By consequence, we are able to prove a number of universality results related with edge and toric ideals of these polynomial rings. We also conclude strong restrictions on the kinds of topologies that can arise from graph complexes and anchored configuration spaces associated to cographs, as well as combinatorial constraints on the possible combinatorics of hyperplane arrangements of cographs. 
\end{quote} }

\section{Introduction}

A \textbf{cograph} is any graph not containing the path $P_4$ as an induced subgraph. Importantly, this class has a recursive characterization as well, being the smallest collection of graphs that contains the single vertex graph and is closed under edge complementation and disjoint union. This class has been the subject of study since at least the 1970's, because of a number of extremely useful properties. For instance, on the combinatorial side, invariants that are famously difficult to compute for general graphs are often times easier to compute for cographs. On the structural graph theory side, it was proven in the 1990's that cographs are well-quasi-ordered by the induced subgraph relation \cite{D}. More recently, Kahle used the recursive nature of cographs to prove that the regularity of their binomial edge ideals (see Definition \ref{def:BinIdeal}) could be cleanly bounded only in terms of the number of vertices of the graph \cite{kahle2019binomial}. The purpose of this manuscript is to unify and expand upon a number of these results from the literature via the recent and very active study of the representation theory of combinatorial categories \cite{sam2017grobner}.

Give a category $\mathcal{C}$, a representation of $\mathcal{C}$ (over a ring $k$) can be thought of as a functor from $\mathcal{C}$ to the category of finitely generated $k$-modules (See Definition \ref{def:Catrep}). More concretely, a representation fo $\mathcal{C}$ can be thought of as a collection of finitely generated $k$-modules, $M(A)$, one for every object $A$ of $\mathcal{C}$, such that every morphism $A \rightarrow B$ of $\mathcal{C}$ induces natural homomorphisms $M(A) \rightarrow M(B)$. We say that $M$ is \textbf{finitely generated} if there is a finite collection of objects $\{A_i\}$ in $\mathcal{C}$ such that for any object $A$ of $\mathcal{C}$, the module $M(A)$ is generated by the images of the $M(A_i)$ according to the aforementioned induced maps coming from the category.

For instance, if $G$ is any group, one can view $G$ as the category $\mathcal{C}_G$, with a single object $\star$ and with morphisms in bijection with the elements of $G$. Composition of these morphisms are then defined by the group law of $G$. In this specific case, a representation of $\mathcal{C}_G$ is then a finitely generated module over the group ring $k[G]$. Slightly more generally, any finite quiver $Q$ can be thought of as a category $\mathcal{C}_Q$ by having edges correspond to morphisms and vertices to objects. In this case, representations of $\mathcal{C}_Q$ become quiver representations of the quiver $Q$. In this work, we will largely be concerned with the category $\Co$, whose objects are cographs and whose morphisms are full embeddings between cographs. That is to say, the morphisms are injective graph homomorphisms whose images are induced subgraphs of the larger graph.

In a prior work \cite{KR}, the following critical technical theorem was proven.

\begin{theorem*}
    Let $M$ denote a representation of the category $\Co$ over a Noetherian ring $k$. If $M$ is finitely generated, then all of its submodules are also finitely generated.
\end{theorem*}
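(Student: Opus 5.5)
The plan is to follow the Sam--Snowden framework of Gröbner categories \cite{sam2017grobner}. The goal is to reduce noetherianity of representations of $\Co$ to a well-quasi-ordering statement, and the natural input for that is Damaschke's theorem \cite{D} that cographs are well-quasi-ordered by induced subgraphs. Since noetherianity passes along a functor satisfying property (F), and is preserved under quotients and finite sums, it suffices to treat the principal projectives $P_A = k[\Hom_{\Co}(A,-)]$. So we must show that each $P_A$ is noetherian.

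\textbf{Step 1: rigidify.} Morphisms in $\Co$ have nontrivial automorphisms, so $\Co$ is not directed. Replace it by an ordered variant $\Co^{\mathrm{ord}}$ whose objects are cographs with a linear order on the vertices, and whose morphisms are order-preserving full embeddings. This category is directed, since its only endomorphisms are identities. The forgetful functor $\Phi\colon \Co^{\mathrm{ord}}\to\Co$ should have property (F): given $A$, the finitely many orderings of $A$ give objects $A_i$, and any morphism $A\to\Phi(B)$ factors as $\Phi$ of an ordered morphism $A_i\to B$ composed with an isomorphism. It then suffices to show that $\Co^{\mathrm{ord}}$ is Gröbner.

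\textbf{Step 2: order the morphisms.} For a fixed source $A$, each morphism $A\to B$ determines an image subset of the ordered vertex set of $B$. Totally order these subsets lexicographically. This order is compatible with postcomposition, because order-preserving injections preserve the lexicographic order on subsets. That gives the admissible order required in the Gröbner condition.

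\textbf{Step 3: the well-quasi-order.} We need the poset $|\mathcal{S}_A|$ to be well-quasi-ordered. Its elements are morphisms $A\to B$, and $f\le g$ when $g=h\circ f$ for some morphism $h$. Encode a morphism $f\colon A\to B$ as a word in the vertices of $B$, in order. Each letter records its adjacencies to earlier vertices and whether the vertex is in the image of $f$. The condition $f\le g$ then becomes "$B$ embeds in $B'$ as an ordered induced subgraph carrying the marked image to the marked image."

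This is the main obstacle. Higman's lemma does not apply directly, because adjacency is not a local letter condition, and ordered graphs are not well-quasi-ordered in general. So I would exploit the recursive structure of cographs. Every cograph is a series or parallel composition, so it corresponds to a cotree with leaves labeled and internal nodes labeled $\oplus/\otimes$. The plan is as follows.
\begin{enumerate}
\item Choose the linear orders compatible with the cotree, i.e.\ use leaf orders of plane cotrees. Morphisms then correspond to embeddings of labeled plane trees preserving node types and lowest common ancestors.
\item Apply Kruskal's tree theorem in its labeled, plane (Higman-ordered children) form, with leaf labels in the finite wqo set $\{\text{marked with each vertex of }A,\ \text{unmarked}\}$.
\end{enumerate}
The delicate point is checking that a Kruskal-type topological embedding of cotrees which preserves node types at inf-images induces a full embedding of cographs. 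This holds because adjacency of two leaves is determined by the label of their meet, and tree embeddings preserve meets. We must also arrange the label comparison so that internal nodes map to nodes of the same type. This variant of Kruskal's theorem, for gap-free labeled meet-preserving embeddings, is what I would verify carefully, possibly by first contracting consecutive same-type nodes so that cotrees alternate.

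Given Steps 1--3, $\Co^{\mathrm{ord}}$ is Gröbner. By the Sam--Snowden theorem its finitely generated representations over the noetherian ring $k$ are noetherian, and by property (F) of $\Phi$ the same holds for $\Co$. This is the statement.
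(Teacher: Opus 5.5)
Your argument is correct in substance. It is essentially the unweighted core of the paper's proof, run through Sam--Snowden's Gr\"obner framework rather than Laudone--Snowden's.

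The paper gets this statement as the special case of Theorem~\ref{thm:MainTech} in which the variables act trivially. That theorem is proved by showing that planar cotrees $\PCot$ form a strongly G-cover of $\Co$; these are the same thing as cographs with cotree-compatible vertex orders, which is the category you end up using. The key input there is a planar labeled Kruskal theorem, proved by minimal bad sequences and Higman's lemma, with leaves labeled by $V_B\cup\{\emptyset\}$ and also carrying natural-number weights. Your Steps 2--3 are exactly the Gr\"obner part of that verification, and that part suffices for $\Co$-modules. You avoid weights entirely, but you do not get Noetherianity over $A_{|\bullet|}$, which the paper needs for its syzygy and Hochster-formula applications.

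Some repairs are needed.

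First, the cover must use cotree-compatible orders from the outset, not arbitrary linear orders. This is not cosmetic: with arbitrary orders the relevant posets are not well-quasi-ordered. Take an ordered copy of $nK_2$ in which all left endpoints precede all right endpoints. It embeds into another such copy only if the associated permutation is a pattern in the other one, and permutations admit infinite antichains. Once you switch to compatible orders, you must recheck property (F). Let $f\colon A\to B$ be a full embedding with $B$ compatibly ordered. The canonical cotree of $A$ arises from the subtree of $T_B$ spanned by $f(A)$ by suppressing unary nodes and merging parent/child pairs with equal labels. Hence each node of $T_A$ has as leaf set the leaves below some node of $T_B$, intersected with $f(A)$. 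That set is an interval, so the pulled-back order on $A$ is compatible, and (F) holds with the finitely many compatible orders of $A$. For (G1) you should also well-order targets first (say by size), and use the lexicographic order on images only among morphisms with a common target.

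Second, morphisms do not correspond bijectively to meet-preserving embeddings of canonical cotrees. Let $T_{B'}$ be a $1$-root whose children are a leaf $a'$ and a $0$-node. That $0$-node has children a leaf $d'$ and a $1$-node over leaves $b',c'$. Then $\{a',b',c'\}$ induces a triangle. Yet no label- and meet-preserving map $T_{K_3}\to T_{B'}$ realizes this embedding, because the root would have to go both to the root of $T_{B'}$ and to the inner $1$-node.

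This second point is harmless for your argument. You only need the other direction: a label- and meet-preserving planar embedding induces an order-preserving full embedding $h$ with $g=h\circ f$. Kruskal then gives, for every infinite sequence in $|\mathcal{S}_A|$, some $i<j$ with $x_i\le x_j$.

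Nor do you need a special ``gap-free'' variant of Kruskal or a contraction step. Canonical cotrees already alternate, and the Nash-Williams construction (root to root, branches into distinct branches via Higman) produces meet-preserving embeddings automatically.
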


This Noetherianity result can be thought of as a kind of categorical version of the aformentioned fact that cographs are well-quasi-ordered by the induced subgraph relation (See \cite{sam2017grobner} for the precise relationship between the two or \cite{ramos2022graph} for a more expository take). It was shown in \cite{KR} how this theorem could then be used to prove various stability and universality phenomena in the homology groups of graph configuration spaces. For us, however, this result will not be sufficient! Indeed, if it is our hope to somehow unify both the structural graph theory of cographs, and the algebra results of Kahle \cite{kahle2019binomial}, we need a higher categorification.

Define the functor $A_{|\bullet|}:\Co \rightarrow k-\text{Alg}$ by the obvious extension of the assignments $A_{|G|}= k[x_v \mid v \in V_G]$. We refer to this as the \textbf{polynomial ring over $\Co$}. Then an $A_{|\bullet|}$-module can be defined as a $\Co$-module $M$ such that for each cograph $G$, $M(G)$ admits an action by $A_{|G|}$, in such a way that all of the natural diagrams commute (see Definition \ref{def:cographPolyRing}) For instance, the edge ideals associated to cographs form an $A_{|\bullet|}$-module. Just as with representations of $\Co$, one can define the notion of a finitely generated $A_{|\bullet|}$-module, leading to the following main technical result of this work

\begin{thmab}\label{thm:MainTech}
     Let $M$ denote a module over the cograph polynomial ring over a Noetherian ring $k$. If $M$ is finitely generated, then all of its submodules are also finitely generated.
\end{thmab}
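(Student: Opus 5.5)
The approach is the standard reduction of modules over a ``twisted commutative algebra'' to representations of a combinatorial category, in the style of Sam--Snowden. We build a new category $\Co^{A}$ whose representations are exactly the $A_{|\bullet|}$-modules, and then prove that $\Co^{A}$ is Noetherian using the Gr\"obner-theoretic criterion of \cite{sam2017grobner}, the same machinery used for $\Co$ in \cite{KR}.

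\textbf{Step 1 (the new category).} Objects of $\Co^{A}$ are cographs. A morphism $G \to H$ is a pair $(f,\mu)$, where $f\colon G \to H$ is a full embedding and $\mu\colon V_H \to \Z_{\geq 0}$ is a monomial on $V_H$ supported away from $f(V_G)$. Composition multiplies the pushed-forward monomials; here we use that the monomial part of $x_v \cdot f_*(-)$ is recorded by $\mu$.

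\textbf{Step 2 (the equivalence).} We check that a finitely generated $A_{|\bullet|}$-module is the same thing as a finitely generated $\Co^{A}$-representation. The principal projective $P_G = A_{|\bullet|}\otimes k[\Hom_{\Co}(G,-)]$ becomes $k[\Hom_{\Co^A}(G,-)]$. A small subtlety is that monomials in variables lying in the image $f(V_G)$ act through $A_{|G|}$ on the source. So the correct free module is $P_G(H)=\bigoplus_{f} A_{|H|}$, and we decompose $A_{|H|}$ as $A_{|G|}$ tensored with monomials in the complement. We will therefore allow $\mu$ arbitrary support and quotient appropriately, or equivalently treat $P_G$ as generated over $A_{|G|}$ (a finitely generated algebra over Noetherian $k$). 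Either way, it suffices to show that every principal projective of $\Co^{A}$ is Noetherian.

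\textbf{Step 3 (Gr\"obner structure).} We follow \cite{KR}: pass to an ordered version $\OI$-style category $\PCot$ of cographs with a linear order on vertices compatible with the cotree, which is presumably how \cite[Theorem 7.2]{KR} is proved. We then verify the three conditions. First, the property that $\Hom$ sets admit compatible admissible well-orders: we order pairs $(f,\mu)$ lexicographically by the order on ordered embeddings and then by a monomial order on $\mu$. Second, the ``orders are preserved by postcomposition'' condition. Third, the key condition that, for each $G$, the poset $|\Hom(G,-)|$ under divisibility is well-quasi-ordered. An element is an ordered cograph $H$ with an embedded copy of $G$ and a vertex labelling by $\Z_{\geq 0}$. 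Divisibility means that there is an ordered full embedding respecting the marked copy of $G$ and with labels increasing ($\leq$ pointwise). So we need that cographs with vertices labelled by a wqo ($\Z_{\geq 0}\times\{\text{marked}/\text{unmarked}\}$, with marks fixed) are wqo under labelled ordered embedding.

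\textbf{Step 4 (main obstacle).} The wqo of labelled cographs is the crux. We would prove it via the cotree: a cograph is a rooted tree whose internal nodes are labelled by series/parallel and whose leaves are the vertices. Kruskal's tree theorem, in its labelled version with leaf labels drawn from any wqo $Q$, gives wqo of labelled cotrees under homeomorphic embedding preserving internal labels. One must check, as in \cite{D} and \cite{KR}, that such an embedding induces a full embedding of cographs: a join/union node is sent to a node of the same type, so adjacency is determined by the type of the least common ancestor, and it is preserved. The ordered version needs the ordered-forest form of Kruskal (Higman on children sequences). Since \cite[Theorem 7.2]{KR} already handles the unlabelled ordered case, the plan is to rerun its proof with $Q=\Z_{\geq 0}$-labels (times the finite marking set), noting that Higman/Kruskal accept arbitrary wqo labels. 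Once this holds, \cite{sam2017grobner} yields that $\Co^{A}$ is quasi-Gr\"obner, the principal projectives are Noetherian over Noetherian $k$, and Theorem \ref{thm:MainTech} follows.
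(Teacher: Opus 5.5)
Your proposal is correct in outline and is essentially the paper's proof. Both arguments rest on two ingredients: the planar cotree cover $\PCot\to\Cot\simeq\Co$, and the well-quasi-ordering of relatively weighted planar cotrees (the paper's $\cM(\PCot,B)$), established by the same Nash-Williams/Higman labelled-Kruskal argument you sketch. The paper quotes Laudone--Snowden's strongly-QG criterion, whereas you unpack it by realizing $A_{|\bullet|}$-modules as representations of the category of pairs $(f,\mu)$ and using the lexicographic order (embedding first, then monomial), which is how that criterion is proved.

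Three cautions. First, commit to $\mu$ of arbitrary support. Your Step 1 category has no non-trivial endomorphisms of $G$, so it does not encode the $A_{|G|}$-action on $M_G$. Second, your alternative of treating $P_G$ as generated over $A_{|G|}$ does not recover the $A_{|\bullet|}$-submodules of $F_G$ as its subobjects. Third, the endomorphisms $(\mathrm{id},\mu)$ make the category non-directed, so the Sam--Snowden criterion should be applied in its orderable-functor form; that is exactly what your Step 3 order and poset verify.
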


\begin{remark}
    The binomial edge ideals of \cite{kahle2019binomial} are not ideals of the standard polynomial ring, but rather the polynomial ring $\Q[x_v,y_w \mid v,w \in V_G]$. Theorem \ref{thm:MainTech} has a generalization for this context as well, which can be found in Theorem \ref{generalizedMain}.
\end{remark}

After proving this result, our next goal will be to apply it in a number of contexts. To start, we look back to the regularity results of \cite{kahle2019binomial}. For each cograph $G$, let $I_G$ denote an ideal of the polynomial ring $A_{|G|}$. We say that a family of such ideals $\{I_G\}$ is \textbf{hereditary} if whenever $G$ is an induced subgraph of $G'$, the inclusion of vertex sets $V_G \hookrightarrow V_{G'}$ induces an inclusion $I_G \hookrightarrow I_{G'}$. In other words, the collection $\{I_G\}$ is hereditary whenever $I_\bullet$ is a submodule of $A_{|\bullet|}$, thought of as an $A_{|\bullet|}$-module. As alluded to throughout this introduction, the most classic example of a hereditary family of ideals is the family of edge ideals $I_G = (x_vx_w \mid \{v,w\} \in E_G)$.

\begin{thmab}
    If $\{I_G\}$ is a hereditary family of ideals, then there exists a finite list of cographs $G_i$ such that if $G$ is any cograph, the ideal $I_G$ is generated by the images of the generators of the ideals $I_{G_i}$ under the maps induced by the induced subgraph relation. In particular, if $k$ is a field, then for any $q \geq 0$ there exists a finite list of cographs $\{G_{i,q}\}$ such that for any cograph $G$, the $q$-th syzygies of $I_G$ are generated by the $q$-th syzygies of the $I_{G_{i,q}}$.
\end{thmab}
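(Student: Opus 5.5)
The first assertion will follow directly from Theorem \ref{thm:MainTech}. A hereditary family $\{I_G\}$ is, by definition, an $A_{|\bullet|}$-submodule $I_\bullet \subseteq A_{|\bullet|}$. The module $A_{|\bullet|}$ is itself finitely generated, by the single element $1 \in A_{|G_1|}$, where $G_1$ is the one-vertex cograph. To see this, I will check that for any cograph $G$, the element $1 \in A_{|G|}$ is the image of $1$ under any full embedding $G_1 \to G$. Such an embedding exists as long as $G$ is nonempty; if the empty graph is allowed as an object, it is the generator instead. Since $1$ generates $A_{|G|}$ as an $A_{|G|}$-module, this settles finite generation. Theorem \ref{thm:MainTech} then makes $I_\bullet$ finitely generated. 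Unwinding the definition of finite generation for $A_{|\bullet|}$-modules, there are finitely many cographs $G_i$ and elements $f_i \in I_{G_i}$ such that every $I_G$ is generated, as an ideal of $A_{|G|}$, by the images of the $f_i$ under all full embeddings $G_i \to G$. We may take the $f_i$ to be generators of the ideals $I_{G_i}$, enlarging the list if needed, which is the stated form.

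For the syzygy statement I will argue by induction on $q$. I will build functorial free resolutions, or at least show that each syzygy module assembles into a submodule of a finitely generated $A_{|\bullet|}$-module. Concretely, set $F_0(G) = \bigoplus_i \bigoplus_{\phi: G_i \to G} A_{|G|}$, with one summand for each full embedding $\phi$. This is a finitely generated free $A_{|\bullet|}$-module: it is the sum of the representable modules $A_{|\bullet|}\otimes k[\Hom(G_i,\bullet)]$, each generated by the identity of $G_i$. The map $F_0 \to I_\bullet$ sends the basis element indexed by $\phi$ to $\phi_*(f_i)$, and it is a natural surjection of $A_{|\bullet|}$-modules. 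Its kernel $K_0$ is a submodule of a finitely generated module, so by Theorem \ref{thm:MainTech} it is finitely generated. Iterating gives a resolution $\cdots \to F_1 \to F_0 \to I_\bullet \to 0$ by finitely generated free $A_{|\bullet|}$-modules with finitely generated kernels $K_q$, for every $q$.

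Evaluating this resolution at a cograph $G$ gives a free resolution of $I_G$ over $A_{|G|}$, although it is not minimal and has infinite rank only if $\Hom$-sets were infinite; they are finite, so each $F_q(G)$ is a finite free module. The $q$-th syzygy module of $I_G$ is determined up to free summands, and it is generated by the image of $K_q(G)$ modulo that ambiguity. By finite generation of $K_q$, the module $K_q(G)$ is generated by images of elements of $K_q(G_{i,q})$ for finitely many cographs $G_{i,q}$. This is precisely the meaning of "the $q$-th syzygies of $I_G$ are generated by those of the $I_{G_{i,q}}$", interpreted via the functorial maps.

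The main obstacle is making this last identification precise when $k$ is a field, if one wants minimal syzygies rather than the syzygies of a functorial resolution. Minimal resolutions are not functorial, so I would state the result for the functorial syzygy modules $K_q$. I would then note that over a field any syzygy module of $I_G$ differs from $K_q(G)$ by a free summand, so generators transfer. Checking that the evaluation at $G$ is exact, which holds because kernels and images in $A_{|\bullet|}$-modules are computed pointwise, is routine.
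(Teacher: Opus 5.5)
Your proposal is correct. The first assertion is handled exactly as in the paper: $I_\bullet \subseteq A_{|\bullet|}$, the latter is generated by $1$ on the one-vertex cograph, and Theorem \ref{thm:MainTech} finishes it.

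For the syzygy assertion your route differs from the paper's.
\begin{itemize}
\item \textbf{Your route.} You stay entirely inside $A_{|\bullet|}$-modules. You build a functorial resolution $F_\bullet \to I_\bullet$ by finitely generated free modules and apply Theorem \ref{thm:MainTech} to each kernel $K_q$. You then read ``$q$-th syzygies'' as the modules $K_q(G)$ of this one resolution. This gives a generation statement at the level of $A_{|G|}$-modules, and it holds over any Noetherian $k$, not just fields.
\item \textbf{The paper's route.} It reads the statement through $G \mapsto \Tor_q(I_G, A_{|G|}/\mathfrak{m})$ and shows this is a finitely generated $\Co$-module (the Proposition before Corollary \ref{ComAlgCor}). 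That ``immediate consequence'' tacitly uses a functorial resolution like yours. It tensors the resolution with $k$ and then needs Noetherianity of $\Co$-modules (Theorem \ref{coNoeth}) to pass to homology. What this buys is a resolution-independent statement about minimal syzygies and Betti numbers.
\end{itemize}

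This difference is exactly where your closing sentence falls short. The splitting $K_q(G) \cong K_q^{\mathrm{min}}(G) \oplus F$ is not compatible with the transition maps. So ``generators transfer'' only shows that minimal syzygies of $I_G$ are generated by projections of images of \emph{non-minimal} syzygies of the $I_{G_{i,q}}$. To get the minimal version, add the paper's step:
\begin{itemize}
\item $\Tor_{q+1}(I_\bullet,k)$ is naturally isomorphic to $\ker(K_q\otimes k \to F_q \otimes k)$, by the long exact sequence of $0\to K_q \to F_q \to K_{q-1}\to 0$ and dimension shifting.
\item This kernel is a $\Co$-submodule of the finitely generated $\Co$-module $K_q\otimes k$.
\item Hence it is finitely generated by Theorem \ref{coNoeth}.
\end{itemize}
Your version does already suffice for the coarse degree bound in Corollary \ref{ComAlgCor} when $I_\bullet$ is graded. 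The reason is that the degrees occurring in $\Tor_{q+1}(I_G,k)$ are among the degrees of the generators of $K_q(G)$, which come from finitely many cographs.
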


One important consequence of this theorem is that we can prove that the \textbf{Betti numbers} of hereditary families of ideals are universally bounded only in terms of the index $q$. See Corollary \ref{ComAlgCor} for a precise statement along these lines.

Expanding upon the commutative algebra that partially inspired this work, our next applications are all in the various realms of topology. Let $G$ be a graph with vertex set $V$. A \textbf{vertex monotone graph complex} is a functor $\Delta_\bullet$ from the category of graphs with full embeddings to the category of abstract simplicial complexes and simplicial maps, with the added requirement that the vertex set of $\Delta_G$ is (a subset of) the vertex set of $G$, for all graphs $G$. The first and arguably most natural example of this kind of complex is the \textbf{independence complex} $\mathcal{I}_G$ of the graph $G$. This is the simplicial complex whose simplicies are independent sets of vertices. It is a simple exercise in combinatorial topology to see that the disjoint union of two graphs has an independence complex that is the topological join of the two individual independence complexes, while the independence complex of the join of two graphs is the disjoint union of the individual independence complexes. In particular, the topology of the independence complex of a cograph can be recursively determined due to the aforementioned recursive characterization of cographs. Our first major topological result will say that, at least up to homology, this finite determination will always hold for vertex monotone graph complexes of cographs.

\begin{thmab}
    Let $\Delta_\bullet$ denote a vertex monotone graph complex. Then for any fixed $i$, the $\Co$-module
    \[
    G \mapsto H_i(\Delta_G;\Z)
    \]
    is finitely generated. Moreover, for any field $k$, vertex monotone graph complex $\Delta_\bullet$ and any $i \geq 0$, there are at most finitely many cographs for which $\dim \tilde{H}_{|V_G| - i - 2}(\Delta_G ; k)$ is non-zero
\end{thmab}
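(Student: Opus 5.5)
The first assertion should follow from the Noetherianity theorem of \cite{KR} quoted in the introduction, applied to simplicial chains. Fix $i$ and consider the $\Co$-module $C_i$ given by $G \mapsto C_i(\Delta_G;\Z)$, the free abelian group on the $i$-simplices of $\Delta_G$. Functoriality of $\Delta_\bullet$ under full embeddings makes this a $\Co$-module, and the simplicial boundary maps give maps of $\Co$-modules $\partial: C_i \to C_{i-1}$.

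Since the vertex set of $\Delta_G$ lies in $V_G$, every $i$-simplex is an $(i+1)$-subset $\sigma \subseteq V_G$. So $C_i$ is a submodule of the $\Co$-module $F_{i+1}$, $G \mapsto \Z\{(i+1)\text{-subsets of } V_G\}$. The module $F_{i+1}$ is finitely generated: an $(i+1)$-subset $\sigma \subseteq V_G$ is the image of the full embedding of the induced subgraph $G[\sigma]$, which is again a cograph since cographs are closed under induced subgraphs. Hence $F_{i+1}$ is generated by its values on the finitely many cographs with at most $i+1$ vertices. By the Noetherianity theorem over $\Z$, $C_i$ is finitely generated, and so is its submodule $Z_i = \ker \partial$. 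Then $H_i(\Delta_\bullet;\Z) = Z_i/\operatorname{im}\partial$ is a quotient of a finitely generated module, hence finitely generated.

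For the second assertion I would pass to commutative algebra via Hochster's formula. Over the field $k$, let $I_G \subseteq A_{|G|}$ be the Stanley--Reisner ideal of $\Delta_G$, viewed on the vertex set $V_G$. Vertices of $V_G$ not in $\Delta_G$ are minimal nonfaces, so they contribute variables to $I_G$. Hochster's formula in the top squarefree multidegree gives
\[
\beta_{i,V_G}(I_G) = \dim \tilde{H}_{|V_G|-i-2}(\Delta_G;k).
\]
Therefore it suffices to show that, for fixed $i$, only finitely many cographs $G$ have a minimal $i$-th syzygy of $I_G$ in multidegree $V_G$.

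Granting that $\{I_G\}$ is a hereditary family, the second theorem of the introduction supplies finitely many cographs $G_{1,i},\dots,G_{r,i}$ whose $i$-th syzygies generate those of every $I_G$. A syzygy pushed forward along a full embedding $G_{j,i} \to G$ has multidegree supported in the image of $V_{G_{j,i}}$. If $|V_G|$ exceeds $\max_j |V_{G_{j,i}}|$, no generator can have multidegree equal to all of $V_G$. Because the syzygy modules are multigraded, the graded piece in degree $V_G$ is then spanned by $A_{|G|}$-multiples of lower-degree elements, so $\beta_{i,V_G}(I_G)=0$. Only finitely many cographs have at most that many vertices, which gives the claim.

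The main obstacle is heredity. A full embedding $G \to G'$ induces a simplicial map $\Delta_G \to \Delta_{G'}$, which sends faces to faces, not nonfaces to nonfaces, so $I_G \subseteq I_{G'}$ is not automatic. The first thing I would try is to use the chain-level description instead: realize the multidegree-$V_G$ piece of the Koszul or Taylor complex of $A_{|G|}/I_G$ as a subquotient of a $\Co$-module built from $C_\bullet$ as in the first part. One could then apply the finite-generation argument to the module $G \mapsto \tilde{H}_{|V_G|-i-2}$ and use the same multidegree-support argument. Failing that, I would replace $\Delta_G$ by the restriction $\Delta_{G'}|_{V_G}$-type family, for which the Stanley--Reisner ideals are hereditary, and compare the two.
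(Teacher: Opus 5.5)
Your first part is correct and is the paper's argument for Corollary~\ref{cor:graphCom}. One small correction: with orientation signs, the module that contains $C_i$ compatibly with $\partial$ is $\bigwedge^{i+1}\Z V_\bullet$ rather than the unsigned span of $(i+1)$-subsets, but it is generated in the same way.

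The second part has a genuine gap exactly where you flag it, and the gap cannot be closed: for general vertex monotone complexes the assertion is false. Let $\Delta_G$ be the complex of all proper subsets of $V_G$. For a full embedding $\phi\colon G\to G'$ and $\sigma\subsetneq V_G$ we have $|\phi(\sigma)|<|V_G|\le|V_{G'}|$. So $\phi(\sigma)$ is a face, and $\Delta_\bullet$ is vertex monotone; on complete graphs it agrees with the dominance complex. Yet $\Delta_G\cong S^{n-2}$ for $n=|V_G|\ge 2$, so $\tilde H_{|V_G|-2}(\Delta_G;k)\cong k$ for every such cograph, and already $i=0$ gives infinitely many. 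On the algebra side, $I_{\Delta_G}$ is the principal ideal generated by $\prod_{v\in V_G}x_v$. Whenever $\phi$ is not a bijection, this generator is sent to a monomial outside $I_{\Delta_{G'}}$. So heredity fails, and $\beta_{0,V_G}(I_{\Delta_G})=1$ for every $G$. Neither fallback helps. The assignment $G\mapsto\tilde H_{|V_G|-i-2}(\Delta_G;k)$ is not a $\Co$-module, because the homological degree moves with $|V_G|$. Replacing $\Delta_G$ by a restricted family changes the complexes whose homology is being measured.

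What your argument does prove is the statement under one extra hypothesis. Require that every full embedding $\phi\colon G\to G'$ identify $\Delta_G$ with the induced subcomplex $\Delta_{G'}|_{\phi(V_G)}$, i.e.\ send nonfaces to nonfaces. This holds for complexes defined by hereditary graph properties, such as independence and clique complexes. Then $\{I_{\Delta_G}\}$ is a hereditary family, and your argument goes through: Hochster's formula, plus the fact that pushed-forward homogeneous generators of the multigraded $\Co$-module $G\mapsto\Tor_i(I_{\Delta_G},k)$ have support of bounded size. This is also the route the paper indicates for Corollary~\ref{finite_betti_cohom} via Corollary~\ref{ComAlgCor}. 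There the heredity of the Stanley--Reisner ideals is used tacitly rather than justified, so the second assertion needs that extra hypothesis.
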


We will then conclude our topological applications by extending a number of directions from the precursor work \cite{KR}. As stated above, one of the primary results of that work was to prove statements about the homology groups of configuration spaces of cographs. In this work we will consider variations on this particular theme.

For instance, For any pair $(G,K)$ (called a \textbf{pointed graph}) of a graph with a selection of vertices, Kozlov \cite{kozlov2023stirling,kozlov2022configuration} considers the subcomplex of the cubical complex $G^n$, of all $n$-tuples of points in $G$ for which every vertex in $K$ appears at least once. These so-called \textbf{anchored configuration spaces} have been shown to have applications to a variety of problems arising from resource management and logistics. In his works \cite{kozlov2022configuration,kozlov2023stirling,kozlov2024homology}, Kozlov was able to compute the homotopy types of these spaces for all trees, as well as study their Euler characteristics. A number of computations were also completed for the case of the cycle graph. One is left with two major observations when studying these results. Firstly, virtually all of the formulas and computations done are seen to depend on very little actual topological structure of the underlying graph. The majority of the topology can often be seen to reduce to purely combinatorial considerations. Secondly, the number of computations that have been thus far completed fall into only a small number of families of graphs. The following theorem puts extremely heavy restrictions on the homology groups of these spaces across the entirety of cographs.

\begin{thmab}
    Let $i,r,n \geq 0$ be fixed integers. Then,
        \begin{enumerate}
            \item For any $i,r,n \geq 0$, there exists a finite list of pointed cographs $\{(G_j,K_j)\}_{j = 1}^l$ with $|K_j| = r$, such that for any pointed cograph $(G,K)$ with $|K| = r$, the homology group $H_i(\Sigma(G,K,n);\Z)$ is generated by the homology classes of $H_i(\Sigma(G_j,K_j,n);\Z)$ pushed forward along full embeddings $(G_j,K_j)\hookrightarrow (G,K)$;
            \item For any $i,r,n \geq 0$, there exists an integer $d_{i,r,n} \geq 0$ such that for any pointed cograph $(G,K)$ with $|K| = r$, the exponent of the group $H_i(\Sigma(G,K,n);\Z)$ divides $d_{i,r,n}$;
            \item For any $i,r,n \geq 0$, and any field $k$, the function,
            \[
            m \mapsto \dim_k(H_i(\Sigma(K_m,[r],n);k))
            \]
            agrees with a polynomial for all $n \gg 0$.
    \end{enumerate}
\end{thmab}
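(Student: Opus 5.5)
The plan is to realize each homology group as a representation of a Noetherian category of pointed cographs and then apply the Noetherianity results already available. Let $\PCot$ be the category whose objects are pointed cographs $(G,K)$ with $|K| = r$ and whose morphisms are full embeddings $G \hookrightarrow G'$ carrying $K$ bijectively onto $K'$. The first step is to show that $\PCot$ is Noetherian over $\Z$. I would do this by encoding a pointed cograph as a cograph with vertex labels in the finite set $\{0,1,\dots,r\}$: label $0$ on $V_G \setminus K$, and the elements of $K$ carrying distinct labels $1,\dots,r$ after fixing an ordering. The restriction $|K| = r$ fixed is what makes the label set finite. The Gröbner-style argument behind \cite[Theorem 7.2]{KR} only uses that labelled cographs are well-quasi-ordered by label-preserving induced embeddings, which follows from Kruskal/Higman-type arguments on cotrees with labelled leaves. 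Taking a quotient by the finite symmetric group $S_r$ acting on labels preserves Noetherianity.

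The second step is functoriality. A full embedding $(G,K)\hookrightarrow(G',K')$ induces a cellular map of cube complexes $G^n \to G'^n$. Because the embedding sends $K$ onto $K'$, every tuple in which all of $K$ appears is sent to a tuple in which all of $K'$ appears. Hence we get a map $\Sigma(G,K,n) \to \Sigma(G',K',n)$, and so
\[
(G,K) \mapsto H_i(\Sigma(G,K,n);\Z)
\]
is a $\PCot$-module. This is also exactly why morphisms must be bijective on anchors rather than merely injective.

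The third step, which I expect to be the main obstacle, is to show that this module is a subquotient of a finitely generated module. The natural approach is the cellular chain complex. An $i$-cell of $\Sigma(G,K,n)$ is an $n$-tuple of vertices and edges of $G$ with exactly $i$ edges, subject to the anchoring condition. Such a cell involves at most $2i + (n-i) \le 2n$ vertices together with $K$. So the $\PCot$-module $C_i(G,K)$ is spanned by cells pushed forward from the induced pointed subgraph on $K$ plus at most $2n$ further vertices. There are finitely many such pointed cographs up to isomorphism, so $C_i$ is finitely generated. The cycles $Z_i \subseteq C_i$ form a submodule, hence are finitely generated by Noetherianity, and $H_i$ is a quotient of $Z_i$. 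This proves (1), with the $(G_j,K_j)$ the sources of the generators.

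For (2), I would pass to $\Q$-coefficients. A finitely generated module over a Noetherian category has a torsion submodule that is also finitely generated. Each generator is torsion of some finite order, and pushforward preserves the order bound, so the lcm of the generator orders gives $d_{i,r,n}$. For (3), I would restrict to the subcategory of complete graphs $K_m$ with anchor set $[r]$. This is essentially $\FI$ acting on the $m - r$ unanchored vertices, with the anchors fixed, so it is a shifted $\FI$-module. Finitely generated $\FI$-modules over a field have eventually polynomial dimension by Church--Ellenberg--Farb. Finite generation is inherited by the same chain-level argument, since that argument works verbatim inside this subcategory. The polynomiality then holds for $m \gg 0$; the ``$n \gg 0$'' in the statement presumably means the cograph size $m$.
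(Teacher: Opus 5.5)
Your proposal is correct and follows essentially the paper's argument. The shared steps are: Noetherianity of $r$-pointed cographs ($\Co_r$ in the paper) via anchor-labelled planar cotrees; finite generation of the cubical $i$-chains of $\Sigma(G,K,n)$; Noetherianity to pass to cycles and then homology; the lcm-of-generator-orders bound on torsion; and restriction to $(K_m,[r])$ as a module over $\FI_{+r}\simeq\FI$ to get polynomial growth in $m$. Your reading of ``$n\gg 0$'' as $m\gg 0$ is the intended one.

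Your small variations are all sound: ordered anchor labels followed by descent along the finite group $S_r$, bounding the anchored chains directly rather than as a submodule of the chains of $G^n$, and redoing the chain argument with $k$-coefficients over the $\FI$-subcategory. One citation fix: eventual polynomiality over fields of positive characteristic is due to Church--Ellenberg--Farb--Nagpal, not Church--Ellenberg--Farb.
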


\section*{Acknowledgments}

    The first and third authors are eternally grateful to Jonathan Nalikka, whose tremendous hard work and wonderful spirit was critical to the production of this work, and who unfortunately passed away before its completion.
    
    This work was primarily completed during the 2023-2024 season of the QED REU at York College. The authors are grateful to Rishi Nath and the other organizers of that REU for their wonderful program.

    The third author was partially supported by NSF grants DMS-2400460 and DMS-2452031.
\section{Background}

\subsection{The combinatorics of cographs}\label{sec:basics}

In this section, we briefly review the classical theory of our primary objects of interest.

\begin{definition}
    In this work, all \textbf{graphs} will be finite and simple. We use the notation $v \sim w$ to indicate that $v$ and $w$ are adjacent. We say that a subgraph $H$ of a graph $G$ is \textbf{induced} if whenever $x,y$ are vertices of $H$ which are adjacent vertices in $G$, they are necessarily adjacent in $H$ as well.

    The class of \textbf{cographs} are defined inductively as follows. The single vertex is a cograph, and if $G_1,G_2$ are cographs, then all of $G_1 \sqcup G_2, \overline{G_1},\overline{G_2}$ are also cographs, where $\overline{\bullet}$ is the edge-complement operation. It is a classically known fact (see \cite{D}, for instance) that this characterization of cographs is equivalent to the condition that $G$ not contain the path $P_4$ on 4 vertices as an induced subgraph.
\end{definition}

It is clear that all complete graphs $K_n$ are cographs, as are all complete bipartite graphs $K_{a,b}$. In fact, more generally, for any arity $r$, the complete $r$-partite graphs $K_{a_1,\ldots,a_r}$ are cographs. The class of cographs was introduced in the 1970's in a large variety of seemingly disparate contexts  (see \cites{Seinsche1974,Jung1978,Sumner1974}, for instance). From this starting point it became clear that cographs share a shockingly wide range of equivalent characterizations which make them very natural and well behaved candidates for many graph theoretic computations and algorithms as, for instance, they are known to always be perfect \cite{Seinsche1974}. Given the aforementioned characterization of  cographs as being induced $P_4$-free, one can also prove perfection of cographs using the strong perfect graph theorem \cite{chudnovsky2006strong}.

For technical reasons that we will see in later sections, an extremely useful way that one can construct and compactly denote cographs is given by what are known as cotrees. before we can define this construction, we need a bit of background on the structure theory of rooted trees.

\begin{definition}
    A \textbf{tree} is a connected, acyclic graph. A \textbf{rooted} tree is a pair $(T,v)$ of a tree along with a choice of vertex, called the \textbf{root}. Importantly, the vertex set of any rooted tree has a poset structure given by the ``distance to the root" ordering. More specifically, if $x,y$ are two vertices of a rooted tree $(T,v)$, then we say $x \leq y$ if the (unique) path in $T$ from $y$ to $v$ passes through $x$. We say that a vertex of a rooted tree is \textbf{internal} if it is not a leaf.\footnote{In the case where $T = v$ is a single vertex, the root is considered a leaf, \emph{not} an internal vertex.}

    If $(T,v)$ and $(T',v')$ are two rooted trees, then a \textbf{rooted homeomorphic embedding} from $T$ to $T'$ is an injection $\phi$ from the vertex set of $T$ to the vertex set of $T'$ which maps $v$ to $v'$, and has the property that $x \leq y$ in $T$ if and only if $\phi(x) \leq \phi(y)$ in $T'$. Note that a homemorphic embedding between rooted trees is \emph{not necessarily} a graph theoretic homomorphism, as it need not send edges to edges. If we drop the requirement that our map preserves the root, then the resulting morphism is known as a \textbf{homeomorphic embedding} from $T$ to $T'$.

    If $Q$ is a quasi-ordered set, then we say that $Q$ is \textbf{well-quasi-ordered} if it does not contain any infinite descending chains, nor does it allow for infinite anti-chains. Given a well-quasi-ordered set $Q$, with quasi-ordering $\leq_Q$, we define \textbf{$Q$-labeled} rooted trees to be triples $(T,v,\psi)$ of a tree $T$ with a chosen root $v$ and a set map $\psi$ from the vertex set of $T$ to $Q$. Given two $Q$-labeled trees, $(T,v,\psi)$ and $(T',v',\psi')$, A homeomorphic embedding $\phi:(T,v) \rightarrow (T',v')$ is said to be \textbf{label respecting} if for any vertex $x$ of $T$,
    \[
    \psi(x) \leq_Q \psi(\phi(x))
    \]

    Finally, we observe that the collection of all $Q$-labeled rooted trees is itself quasi-ordered by the relation $(T,v,\psi) \leq_{K} (T',v',\psi')$  if and only if there exists a label respecting homeomorphic embedding from $(T,v,\psi)$ to $(T',v',\psi')$\footnote{Of course, one can also define this ordering using \emph{rooted} homeomorphic embeddings. The difference between these two definitions is not entirely relevant, and does not change the statement of Kruskal's Tree Theorem \ref{Kruskal}.}
\end{definition}

Given all of the above, it becomes a natural question to ask whether the ordering $\leq_K$ is itself well-quasi-ordered given that our labels are. This is indeed the case due to the following celebrated theorem of Kruskal.

\begin{theorem}[Kruskal's Tree Theorem, \cites{kruskal1960well,Nash-Williams_1963}]\label{Kruskal}
For any well-quasi-ordered set $Q$, the collection of $Q$-labeled rooted trees is itself well-quasi-ordered by the relation $\leq_K$ defined above.
\end{theorem}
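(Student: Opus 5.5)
The plan is to follow Nash-Williams' minimal bad sequence argument. This reduces Theorem~\ref{Kruskal} to Higman's lemma applied to the forests of subtrees hanging below the roots. Since a rooted label-respecting homeomorphic embedding is in particular a homeomorphic embedding, it suffices to prove the statement for the stronger, rooted relation.

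Call a sequence $T_1, T_2, \ldots$ of $Q$-labeled rooted trees \emph{bad} if there are no $i<j$ with $T_i \leq_K T_j$. For a quasi-order, being well-quasi-ordered is equivalent to having no bad sequence: an infinite descending chain or an infinite antichain is itself bad, and conversely a bad sequence yields one of these by Ramsey's theorem. Suppose, for contradiction, that a bad sequence exists. Choose $T_1$ of minimal vertex count among all first terms of bad sequences. Then choose $T_2$ of minimal size among all bad sequences beginning with $T_1$, and continue in the same way. This produces a minimal bad sequence $(T_n)$.

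For each $n$, let $r_n$ be the root of $T_n$, with label $\psi(r_n)$. Let $F_n$ be the finite multiset of subtrees rooted at the children of $r_n$. Each of these subtrees has fewer vertices than $T_n$. Let $S = \bigcup_n F_n$. First I will show that $S$ is well-quasi-ordered. Suppose instead that $S_1, S_2, \ldots$ is a bad sequence in $S$ with $S_k \in F_{n_k}$. After passing to a subsequence, we may assume $n_1 < n_2 < \cdots$. Now consider the sequence
\[
T_1, \ldots, T_{n_1 - 1}, S_1, S_2, \ldots .
\]
It is bad. Indeed, if $T_i \leq_K S_k$, then $T_i$ also embeds in $T_{n_k}$, because a subtree below $r_{n_k}$ sits inside $T_{n_k}$ with the order preserved. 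This contradicts the badness of $(T_n)$, since $i < n_1 \leq n_k$. The new sequence is bad and has a smaller $n_1$-th term than $(T_n)$, which contradicts minimality.

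Next, apply Higman's lemma to the finite sequences (or multisets) drawn from the well-quasi-ordered set $S$. This shows that the forests $F_n$ are well-quasi-ordered under injective, order-preserving termwise domination. Since $Q$ is well-quasi-ordered, the product $Q \times \mathrm{Forests}(S)$ is also well-quasi-ordered. Hence there exist $i<j$ with $\psi(r_i) \leq_Q \psi(r_j)$ and an injection $F_i \to F_j$ sending each subtree to a distinct subtree that dominates it. Glue these together by sending $r_i$ to $r_j$. Distinct child subtrees land in disjoint subtrees of $T_j$, so the resulting map is injective. It also preserves and reflects $\leq$: two vertices taken from different child subtrees of $r_i$ are incomparable, and their images are incomparable as well. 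Therefore $T_i \leq_K T_j$, a contradiction. There is one base case to check: a one-vertex tree has empty $F$, and the empty forest embeds in every forest.

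The main obstacle is the Higman step. It needs to be carefully justified that the forest ordering obtained from Higman's lemma, which uses subsequences of ordered lists, gives injective maps between multisets of subtrees. Ordering the children arbitrarily and using the subsequence version handles this.
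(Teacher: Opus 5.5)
Your argument is the standard Nash-Williams minimal-bad-sequence proof. The paper does not prove this theorem itself; it cites Kruskal and Nash-Williams. It does, however, sketch the same scheme in its proof for planar cotrees: a minimal bad sequence, the root branches forming a wqo by minimality, Higman's lemma on the branch sequences, and then matching root labels and gluing. So the approach is right, but one step fails as you have framed it.

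\textbf{The problem.} You open by reducing to the rooted relation, but your argument does not prove the rooted version. In showing that $S$ is well-quasi-ordered, you use that $T_i \le_K S_k$ forces $T_i$ to embed in $T_{n_k}$. If $\le_K$ means rooted embeddings, the composite $T_i \to S_k \hookrightarrow T_{n_k}$ sends the root of $T_i$ to the root of $S_k$, which is a child of $r_{n_k}$. You can redirect it to $r_{n_k}$ only when $\psi(r_i) \le_Q \psi(r_{n_k})$. For example, take $Q=\{a,b\}$ as an antichain. A one-vertex tree labeled $a$ embeds root-to-root in a child subtree whose root is labeled $a$. It does not embed root-to-root in the ambient tree if that tree's root is labeled $b$. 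So the spliced sequence $T_1,\ldots,T_{n_1-1},S_1,S_2,\ldots$ need not be bad for the rooted order, and the minimality contradiction is lost.

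\textbf{The fix.} Drop the first sentence and run the whole argument with the unrooted relation, which is exactly the paper's $\le_K$.
\begin{itemize}
\item The subtree step is then valid.
\item Your gluing step only needs unrooted embeddings of the child subtrees into distinct child subtrees of $T_j$. It still yields an injective, order-preserving, order-reflecting, label-respecting map, and in fact one sending root to root.
\end{itemize}
If you also want the rooted statement, deduce it afterwards. Apply the unrooted theorem with labels in $Q \times \{\text{root}, \text{non-root}\}$, ordered by $\le_Q$ in the first coordinate and by equality in the second. Any label-respecting embedding must then send the root to the root.
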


\begin{remark}
    Being that a homeomorphic embedding will generally send a tree with fewer vertices into a tree with more, the ``hard" part of proving that $\leq_K$ is a well-quasi-order is the condition that it does not permit infinite anti-chains. In other words, given any infinite collection of $Q$-labeled rooted trees, one must prove that there exists a pair in the list and a homeomorphic embedding sending one into the other which respects the labels.
\end{remark}

Kruskal's Tree theorem was originally conjectured by V\'azsonyi and proven by Kruskal in \cite{kruskal1960well}. Nash-Williams provided a different and considerably more concise proof in \cite{Nash-Williams_1963}. Kruskal's work was one of the earliest great successes in the theory of well-quasi-orders, which arguably culminated in the proof of the Graph Minor Theorem by Robertson and Seymour in \cite{robertson2004graph}. For the reader interested in the deep structural combinatorics and graph theory required to prove these results, \cite{lovasz2006graph} provides a wonderful exposition in this direction.

\begin{remark}
    A special case of Kruskal's Tree theorem that is frequently used in this work and others is the case where $Q$ is finite and given the structure of a well-quasi-order by setting $\leq_Q$ to be the equality relation. In this case, Kruskal's tree theorem can be more concretely stated as saying that given any infinite collection of rooted trees whose vertices are labeled by the elements of $Q$, there must exist a pair in the infinite list and a homeomophic embedding from one to the other, which \emph{preserves} the labels. In other words, the map must send each vertex to a vertex with the exact same label.
\end{remark}

Having stated all the necessary background in rooted trees and well-quasi-orders, we are now ready to define cotrees.

\begin{definition}
    Let $S$ denote the set
    \[
    S = \{0,1,l\}
    \]
    considered as a well-quasi-order under the equality relation. Then a \textbf{cotree} is a $S$-labeled rooted tree $(T,v,\psi)$ satisfying the following additional conditions:
    \begin{enumerate}
        \item every internal vertex is labeled with either 0 or 1, and every leaf is labeled with $l$;
        \item every internal vertex has at least two children;
        \item if an internal vertex is labeled with $0$, then all of its internal children must be labeled with $1$, whereas if an internal vertex is labeled 1, its internal children must be labeled with 0.
    \end{enumerate}
    A morphism between cotrees is a homeomorphic embedding between the underlying rooted trees which respects the labels.
\end{definition}

To see the relationship between cotrees and cographs, recall that the class of cographs has an inductive definition beginning at the single vertex, and building new cographs via complementation and disjoint union. Given a cotree $(T,v,\psi)$, we can construct a cograph inductively as follows:

\begin{itemize}
    \item if $T = v$ is the single vertex labeled with $l$, then the associated cograph is also a single vertex;
    \item if the root $v$ has children $w_1,\ldots,w_c$ with $c > 1$, and label $\psi(v) = 0$, then the associated cograph is the disjoint union of the cographs associated to the various cotrees rooted at $w_1,\ldots,w_c$;
    \item if the root $v$ has children $w_1,\ldots,w_c$ with $c > 1$, and label $\psi(v) = 1$, then the associated cograph is obtained by taking the complements of the cographs assigned to the children, taking the disjoint union of these complements, and then taking the complement of what results.
\end{itemize}

The third operation described above, when $\psi(v) = 1$, is also sometimes called the \textbf{join} of the various cographs involved. One may equivalently think about this operation as taking the disjoint union of all of the cographs associated to the children of the root, and then connecting every vertex to every other vertex not in its own cograph. One can see a collection of examples of cotrees and their corresponding cographs in Figure \ref{exampleCo}

\begin{figure}
    \caption{Some cotrees (right) and their corresponding cographs (left)}\label{exampleCo}
        \includegraphics[width=0.5\textwidth]{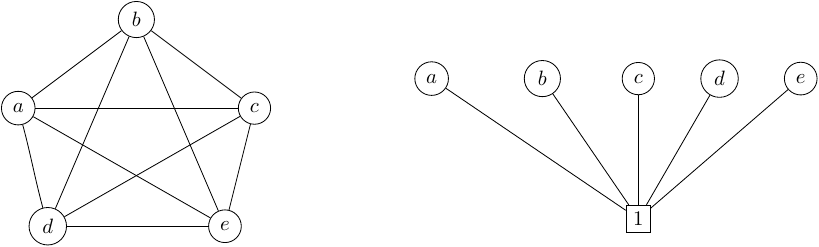}\\
        \includegraphics[width=0.5\textwidth]{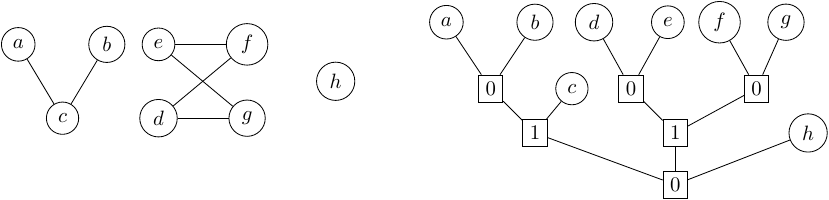}\\
        \includegraphics[width=0.5\textwidth]{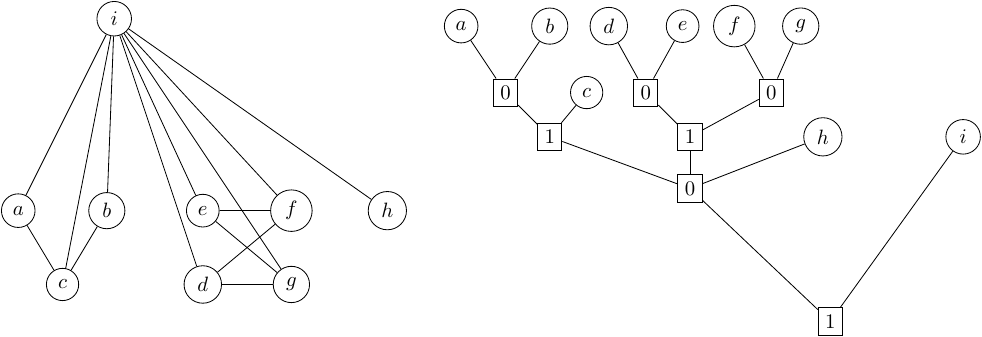}
\end{figure}

We note that by the nature of this inductive construction, the leaves of the original cotree will correspond with the vertices of the resulting cograph. Moreover, two vertices in the associated cograph will be connected by an edge if and only if the first common ancestor of corresponding leaves in the cotree is labeled with a 1. In otherwords, because morphisms between cotrees must preserve all labels, any morphism between two cotrees naturally defines a map between the corresponding cographs.

While the above operation clearly constructs a cograph from every cotree, what is considerably less obvious is that this process is reversible. In fact, something a bit stronger is true as compiled by the following theorem. For the purposes of this theorem, a \textbf{full embedding} from any graph $G$ to another graph $H$ is an injection between their vertex sets with the added property that two vertices are connected by an edge in $G$ if and only if their images are connected by an edge in $H$. In other words, $G$ fully embeds into $H$ if and only if $G$ is an induced subgraph of $H$.

\begin{theorem}\label{cotreeIsCograph}
    Let $\mathsf{Cot}$ denote the category of cotrees and morphisms between cotrees, and let $\mathsf{Co}$ denote the category of cographs and full embeddings. Then the algorithm described above defines an equivalence between $\mathsf{Cot}$ and $\mathsf{Co}$.
\end{theorem}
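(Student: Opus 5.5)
The plan is to build the functor $\Phi:\Cot\to\Co$ explicitly and then verify that it is essentially surjective, full, and faithful. Both directions rest on one observation. For leaves $a\neq b$ of a cotree, write $a\wedge b$ for their first common ancestor. By the inductive construction, $a\sim b$ in $\Phi(T)$ if and only if $\psi(a\wedge b)=1$. Also, because every internal vertex has at least two children, every internal vertex $c$ equals $a\wedge b$ for some leaves $a,b$ lying under different children of $c$. I will write $L(c)$ for the set of leaves below $c$.

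\textbf{Step 1 (functoriality).} Let $\phi:T\to T'$ be a morphism. Leaves carry the label $l$, so $\phi$ sends leaves to leaves. I define $\Phi(\phi)$ to be the restriction of $\phi$ to leaves, which is injective. To see it is a full embedding, take leaves $a,b$ with $c=a\wedge b$. The order condition gives $\phi(c)\le\phi(a)\wedge\phi(b)=:m$, and I must show $\psi'(m)=\psi(c)$. Suppose $\phi(c)<m$. Then $\phi(a)$ and $\phi(b)$ lie under a single child $d$ of $\phi(c)$. Let $c_a,c_b$ be the children of $c$ on the paths to $a,b$. Their images lie below $\phi(c)$ and are comparable with $\phi(a)$ and $\phi(b)$ respectively. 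I expect a short case analysis to place $\phi(c_a)$ and $\phi(c_b)$ comparably, or to contradict injectivity or the \emph{iff} in the order condition. If that analysis does not close, I will adopt the standard Kruskal convention that homeomorphic embeddings preserve infima, which gives $m=\phi(c)$ directly. Preservation of composition and identities is then immediate.

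\textbf{Step 2 (essential surjectivity, and rigidity of cotrees).} I will use the classical fact (Seinsche) that a cograph on at least two vertices is either disconnected or has disconnected complement, but not both. This follows from induced $P_4$-freeness, or directly from the recursive definition. By induction on $|V_G|$, set the root label to $0$ with children the cotrees of the connected components when $G$ is disconnected, and to $1$ with children the cotrees of the co-components otherwise. The alternation condition (3) holds automatically: a component of a disconnected graph is connected, so its root gets label $1$, and symmetrically for co-components. The same induction also shows that any cotree $T$ with $\Phi(T)\cong G$ must arise this way. Indeed, the children of a $0$-root span subgraphs with no edges between them, and each child is itself connected because its root is labelled $1$ or it is a single vertex. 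Hence the subtrees at the children are exactly the components. So $T$ is determined by $G$ up to isomorphism.

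\textbf{Step 3 (full and faithful).} This is the step I expect to be the main obstacle. Given a full embedding $f:G\to H$, I extend $f$ from leaves to a cotree map by sending an internal vertex $c$ to $\tilde f(c):=\bigwedge f(L(c))$ in $T_H$. I then check three things in order.
\begin{itemize}
\item \emph{Labels.} The induced subgraph $H[f(L(c))]\cong G[L(c)]$ is disconnected or co-disconnected according to $\psi(c)$. The children of $\tilde f(c)$ split $f(L(c))$ into at least two nonempty parts with all cross pairs adjacent or all nonadjacent according to $\psi'(\tilde f(c))$. By the dichotomy of Step 2 the two labels must agree.
\item \emph{Order.} Monotonicity is clear. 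For incomparable $c,c'$ the leaf sets $L(c),L(c')$ are disjoint, and I must show the meets are incomparable. I will argue by contradiction: if the meet of $f(L(c))$ lay below the meet of $f(L(c'))$, then comparing adjacency patterns through their common ancestor in $T_G$ would produce a label mismatch.
\item \emph{Injectivity.} This is proved in the same way.
\end{itemize}
The extension is not obviously unique, since an internal vertex could potentially be sent to a different ancestor with the same label. So for faithfulness I will show that any morphism is forced to send $c$ to $\bigwedge\phi(L(c))$. The plan is to use the \emph{if and only if} order condition together with the fact that $c$ is the meet of two leaves from distinct children. If this forcing fails in general, the fallback is to identify morphisms that agree on leaves, and the equivalence still holds after this identification.
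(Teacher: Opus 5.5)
There is a genuine gap, and it sits exactly where your two hedges meet: they cannot both be resolved in your favour. First, the case analysis in Step~1 does not close. Let $T$ have root $c$ labelled $1$ and leaves $a,b$, so $\Phi(T)=K_2$. Let $T'$ have root $r$ labelled $1$, whose children are a leaf $y$ and a vertex $x$ labelled $0$ with leaves $p,q$, so $\Phi(T')=P_3$ with middle vertex $y$. The map $c\mapsto r$, $a\mapsto p$, $b\mapsto q$ is injective, root- and label-preserving, and satisfies $u\le v\iff\phi(u)\le\phi(v)$. Yet it sends the edge $ab$ to the non-edge $pq$. Here $\phi(c_a)=p$ and $\phi(c_b)=q$ are incomparable leaves, so there is nothing to contradict. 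With the morphisms as literally defined, restriction to leaves is therefore not a functor to $\Co$, and you are forced into your fallback of infimum-preserving embeddings. Under that convention your faithfulness worry disappears, since $c=a\wedge b$ forces $\phi(c)=\phi(a)\wedge\phi(b)$. Under the literal convention the forcing really does fail, because an internal vertex can also be sent to a vertex closer to the root carrying the same label.

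But with infimum-preserving morphisms Step~3 breaks, and not just in its proof: the functor is not full. The three properties you check for $\tilde f(c)=\bigwedge f(L(c))$ (labels, order, injectivity) do hold. However, you never check that $\tilde f$ preserves infima, and it need not. Take $G=\overline{K_3}$, whose cotree is a root $c$ labelled $0$ with leaves $a,b,e$. Take $H=K_1\sqcup P_3$, whose cotree has root $m$ labelled $0$ with children a leaf $e'$ and a vertex $d$ labelled $1$; $d$ has children a leaf $z$ and a vertex $n$ labelled $0$ with leaves $p,q$. Then $a\mapsto p$, $b\mapsto q$, $e\mapsto e'$ is a full embedding. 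An infimum-preserving lift would need $\phi(c)=p\wedge q=n$ and also $\phi(c)=p\wedge e'=m\neq n$, so none exists.

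The structural reason is that the cotree of an induced subgraph $H[S]$ is obtained from that of $H$ in two steps. First keep only $S$ and the meets of pairs from $S$ (here $d$ and $z$ disappear). Then merge parent--child pairs with equal labels (here $m$ and $n$). That contraction is invisible to homeomorphic embeddings.

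So your argument proves two different things under two different conventions. With infimum-preserving morphisms you get a faithful and essentially surjective functor $\Cot\to\Co$ (Steps~1 and~2, which are fine). With the literal definition you get that every full embedding lifts (Step~3). Neither gives an equivalence. Your last fallback, identifying morphisms that agree on leaves, only produces one if you also restrict to maps whose leaf restriction preserves the labels of pairwise meets. That amounts to redefining cotree morphisms as full embeddings of the associated cographs. The paper gives no proof of this statement itself and defers to the cited literature. What is missing is therefore a corrected notion of cotree morphism, or a weaker conclusion, rather than a sharper version of your case analysis.
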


The fundamental and combinatorial core of Theorem \ref{cotreeIsCograph} was proven in \cite{D}, though that work did not use categorical language in its presentation. The statement as it is presented here in terms of an equivalence of categories first appeared in the precursor work \cite{KR}. We will see that the choice to frame this statement as a categorical one is in many ways the core driving point of this work, as well as parts of \cite{KR}.

Classically speaking, the primary reason why the cotree construction was considered so powerful is because of what it implies when combined with Kruskal's Tree Theorem.

\begin{corollary}\label{cographWQO}
    The collection of cographs is well-quasi-ordered by the induced subgraph relation.
\end{corollary}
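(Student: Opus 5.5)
The plan is to transport the question from cographs to cotrees and then apply Kruskal's Tree Theorem. First I need to check that the induced subgraph relation is reflexive and transitive. Identity maps are full embeddings, and a composite of full embeddings is again a full embedding. So the induced subgraph relation (up to isomorphism) is a quasi-order on cographs.

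There are no infinite strictly descending chains. A strict full embedding $G' \hookrightarrow G$ that is not an isomorphism must have $|V_{G'}| < |V_G|$, because an injection between vertex sets of equal size which preserves edges and non-edges in both directions is an isomorphism. Any strictly descending chain therefore strictly decreases a nonnegative integer, so it is finite.

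The main point is excluding infinite antichains. Let $G_1, G_2, \ldots$ be any infinite sequence of cographs. By Theorem \ref{cotreeIsCograph}, each $G_i$ is isomorphic to the cograph associated to some cotree $T_i$. Each $T_i$ is in particular an $S$-labeled rooted tree, with $S=\{0,1,l\}$ ordered by equality; $S$ is finite, so it is trivially well-quasi-ordered. Kruskal's Tree Theorem \ref{Kruskal} then gives indices $i<j$ and a label-respecting homeomorphic embedding $\phi\colon T_i \to T_j$.

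Now $\phi$ is exactly a morphism of cotrees in $\Cot$, since the cotree conditions are properties of objects and morphisms are defined as label-respecting homeomorphic embeddings. The functor of Theorem \ref{cotreeIsCograph} sends $\phi$ to a full embedding $G_i \hookrightarrow G_j$. Concretely, leaves must go to leaves because labels are preserved. Order-preservation in both directions together with label preservation ensures that the first common ancestor of two leaves maps to a vertex carrying the same label as the first common ancestor of their images. Hence adjacency is preserved and reflected. So no infinite sequence of cographs is an antichain, and the induced subgraph relation is a well-quasi-order.

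The only delicate step is this last identification of a Kruskal embedding with a cotree morphism. A homeomorphic embedding need not send meets to meets on the nose, so I would rely on Theorem \ref{cotreeIsCograph}, which packages exactly this fact, rather than reprove it.

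If I were worried about rooted versus unrooted embeddings, the footnote to the definition of $\leq_K$ notes that Kruskal's theorem holds in either version. So I can take whichever version matches the morphisms of $\Cot$.
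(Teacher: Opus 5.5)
Your route is the paper's: realize each cograph by a cotree via Theorem~\ref{cotreeIsCograph}, then apply Kruskal's Tree Theorem~\ref{Kruskal} with labels $\{0,1,l\}$ ordered by equality. The step that needs correcting is your ``concretely'' justification, which is false as stated.

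An injective, root- and label-preserving $\phi$ with $x\leq y \iff \phi(x)\leq\phi(y)$ only yields $\phi(a\wedge b)\leq \phi(a)\wedge\phi(b)$. Internal labels alternate along the path between these two vertices, so their labels agree only when that path has even length. For example, let $T$ be a $0$-labelled root with two leaves $a,b$. Let $T'$ be a $0$-labelled root with a leaf $d$ and a $1$-labelled child $c$ whose two leaves are $a',b'$. The map sending root to root, $a\mapsto a'$, $b\mapsto b'$ satisfies every condition you list, yet it sends the non-edge $ab$ to the edge $a'b'$. So deferring to Theorem~\ref{cotreeIsCograph} only helps if cotree morphisms are understood to preserve meets. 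With the order-embedding definition as literally written, this map is a morphism of $\Cot$ that does not give a full embedding.

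The fix runs opposite to your worry that embeddings need not preserve meets. Kruskal's theorem, as proved by Kruskal and by Nash-Williams, produces \emph{infimum-preserving} embeddings, $\phi(x\wedge y)=\phi(x)\wedge\phi(y)$; these are automatically order embeddings. Apply that version to your $T_i$:
\begin{itemize}
\item leaves go to leaves because the label $l$ is preserved;
\item $\phi(a)\wedge\phi(b)=\phi(a\wedge b)$ carries the label of $a\wedge b$.
\end{itemize}
Hence adjacency is preserved and reflected, and $\phi$ restricts on leaves to a full embedding $G_i\hookrightarrow G_j$. This uses only two facts: every cograph comes from a cotree, and adjacency is read off from the label of the first common ancestor. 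The full categorical equivalence is not needed.
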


Again, this classical combinatorial statement originates from \cite{D}.

\subsection{Polynomial rings on graphs}

In this section we detail a number of constructions connecting graph theory with classical algebra, specifically the algebra of polynomials.

\begin{definition}\label{def:BinIdeal}
    Let $G$ denote a graph with vertex set $V_G$ and edge set $E_G$, and fix a (commutative) Noetherian ring $k$. We will write $A_{|G|}$ to denote the \textbf{vertex polynomial ring}
    \[
    A_{|G|} = k[x_v]_{v \in V_G},
    \]
    and $A_{||G||}$ to denote the \textbf{edge polynomial ring}
    \[
    A_{||G||} = k[x_e]_{e \in E_G}.
    \]

    The \textbf{edge ideal} of $G$ is the ideal of $A_{|G|}$ generated by the (square free) monomials $x_vx_w$, where $v$ and $w$ are adjacent vertices. We can also define the \textbf{binomial edge ideal} of $G$ to be the ideal in $A_{|G|}^{\otimes 2}$ generated by binomials of the form $x_vy_w-y_vx_w$, where $v$ and $w$ are again adjacent. Finally, the \textbf{parity edge ideal} of $G$ is the ideal in $A_{|G|}^{\otimes 2}$ generated by binomials of the form $x_vx_w - y_vy_w$.
\end{definition}

The edge ideal of a graph was introduced by Villarreal in \cite{villarreal1990cohen}, and has since become a staple in a particular subgenre of commutative algebra (see \cite{morey2012edge} for a survey).

A very common thread in this work is to try and understand what the combinatorics of the graph dictates above the algebra of the edge ideal and its syzygies, and vice versa. For instance, it is easily seen that the Krull dimension of the edge ideal agrees with the size of the largest independent set of the graph (i.e. the independence number). Less obvious is the fact that the Castelnuovo-Mumford regularity of the edge ideal is bounded from above by the minimum size of a maximal matching in the graph \cite{woodroofe2014matchings}.

Binomial edge ideals were introduced in \cite{herzog2010binomial} as a common generalization of a number of objects that had been studied in algebraic statistics and commutative algebra. As with edge ideals, algebraic properties of binomial edge ideals have been shown to be closely linked with combinatorial properties of the corresponding graphs (see \cite{saeedi2016binomial} for a survey of such results). Finally, parity edge ideals were introduce quite recently in \cite{kahle2016parity}. It is noted in that work that these ideals are considerably more complicated algebraically because their natural binomial generating set is generally \emph{not} a Gr\"obner basis.

\begin{remark}
    It is extremely relevant to remark that while the edge ideals and binomial edge ideals of graphs can form a bridge between algebra and combinatorics, this bridge is not perfect. This is seen, for instance, in the fact that many relevant homological and algebraic properties of these ideals depend on the characteristic of the underlying field $k$, and therefore cannot purely be described based on the combinatorics of the graph. We therefore stress that many of the results of this paper do not depend on the characteristic of $k$ and, in fact, do not even require that $k$ be a field.
\end{remark}

In this paper, we aim to prove facts about edge ideals and binomial edge ideals associated to cographs. One pre-existing work in this direction is \cite{kahle2019binomial}, where the regularity of the binomial edge ideal associated to any cograph is upper bounded as a linear function of the number of vertices. While weaker linear bounds were known in the general graph case, Kahle and Kr\"usemann showed that these general bounds could be vastly tightened in the cograph case. 

The key argument in \cite{kahle2019binomial} is to use the fact that cographs are recusively built out of repeated unions and joins. One then argues that both of these operations reflect in a very specific way on the binomial edge ideal, and apply inductive arguments to conclude the desired bounds. In the present work, the arguments of \cite{kahle2019binomial} are ``categorified" in a sense. By doing so, we are able to recover shades of the bounds found in \cite{kahle2019binomial}, but across \emph{all} ideals in $A_{|G|}$ (or more generally $A_{|G|}^{\otimes c}$ for any constant $c$) that are preserved by the relevant cograph structures. This will be discussed in more detail in Section \ref{sec:categorical}.

Finally, one may have observed that none of the constructions discussed above relate with the edge algebra $A_{||G||}$. Indeed, we will see later that the methods of this paper fundamentally cannot work on ideals of that ring. It is mentioned here for completeness, as well as to later illustrate why the combinatorics of cographs makes working with edges rather than vertices much harder. This will be a recurring theme throughout the work.

\subsection{Graphical simplicial complexes}

In this section we discuss the foundational aspects of some of the topological applications of our machinery.

\begin{definition}
    Let $G$ be a graph with vertex set $V$. A \textbf{vertex monotone graph complex} is a functor $\Delta_\bullet$ from the category of graphs with full embeddings to the category of abstract simplicial complexes and simplicial maps, with the added requirement that the vertex set of $\Delta_G$ is (a subset of) the vertex set of $G$, for all graphs $G$. More concretely, a vertex monotone graph complex is a collection of simplicial complexes $\{\Delta_G\}$, each on (a subset of) the vertex set $V_G$ of $G$, with the added property that any full embedding $G \hookrightarrow G'$ functorally induces simplicial maps $\Delta_G \rightarrow \Delta_{G'}$.
\end{definition}

The first example of a vertex monotone graph complex is the \textbf{independence complex} $\mathcal{I}_G$. More specifically,
\[
\mathcal{I}_G^{(d)} := \{\{v_1,\ldots,v_{d+1}\} \mid v_i \not\sim v_j \forall i,j\}
\]

Independence complexes appear all throughout combinatorial topology and geometry, including quite recently (and somewhat surprisingly) in relation to varieties associated to cluster algebras \cite{lam2023cohomology}. Related to the independence complex one may also define the \textbf{clique complex} or \textbf{flag complex} of $G$, built out of collections of vertices which form a complete subgraph in $G$. Of course, the flag complex of any graph $G$ is isomorphic to the independence complex of its complement $\overline{G}$. Outside of these two there are a number of other vertex monotone complexes that one can study including,

\begin{itemize}
    \item Neighborhood complexes, built out of collections of vertices, all of whom pairwise have at least one common neighbor \cites{lovasz1978kneser,kahle2007neighborhood};
    \item Dominance complexes, built out of collections of vertices whose \emph{complements} in the full vertex set are dominating sets. That is to say, every vertex in the graph is either in the set, or adjacent to something in the set \cites{ehrenborg2006topology,matsushita2025dominance}. Note that, in this case, the vertex set of the simplicial complex may not be the entire vertex set of the graph. Indeed, isolated vertices of the graph cannot appear as vertices of the simplicial complex;
    \item $n$-cut complexes, built out of \emph{complements} of collections of vertices which contain at least one independent set of size $n$ \cite{bayer2024topology}. Note that just as with the last case, the vertex set of the complex is not necessarily the entire vertex set of the graph, in the case where a vertex appears in all $n$-element independent sets;
    \item If $\mathcal{P}$ is any \textbf{hereditary} graph property (i.e. one that descends to induced subgraphs), then one may form a complex associated to an arbitrary graph $G$, by having the simplicies correspond to induced subgraphs of $G$ with the property $\mathcal{P}$. Note that the independence and clique complexes are examples of such graph complexes, where the property $\mathcal{P}$ is ``does not have edges," or ``is a complete graph," respectively. On the other hand, the neighborhood complex is not associated to a hereditary property.
    \item Hom-complexes\footnote{These complexes are actually polyhedral rather than simplicial. Most of the results of this paper will still apply for identical reasons.} built out of multi-homomorphisms between two fixed graphs;
\end{itemize}

In this work, we will appeal to the independence and clique complexes as natural -- and it will turn out in our case, simple -- examples of vertex monotone simplicial complexes that one can associate to cographs. To see why these complexes are simple in the cograph case, observe that for any two graphs $G,G'$,
\begin{align*}
\mathcal{I}(G \sqcup G') \cong \mathcal{I}(G) \star \mathcal{I}(G)\\
\mathcal{I}(G \star G') \cong \mathcal{I}(G) \sqcup \mathcal{I}(G),
\end{align*}
where $\star$ is the simplicial join operation. It follows that, through usage of things like K\"unneth and standard long exact sequences from topology, the homology groups of the independence complex of any cograph is naturally built out of the homology groups of its building blocks through the cotree construction (Section \ref{sec:basics}). In other words, \textbf{the homology groups of the independence complexes of a general cograph can always be described purely in terms of homology groups of smaller constituent cographs.} One can think of this as a kind of topological finite generation, which we will make more precise in the next section.

Just as the work of \cite{kahle2019binomial} provides a combinatorial framework for studying the bionomial edge ideals of cographs, which we categorify here for the sake of expansion and generalization, the above simple argument illustrates one very special case of what we will accomplish for more general graph complexes. While independence complexes and clique complexes are made simple by their relationships to the join and union operations, it is certainly not the case that \emph{every} vertex monotone complex will behave in a similar fashion. Despite this level of generality, our methods allow for one to prove similar topological finite generation results across all cographs.

Prior to this work, there were a number of papers that viewed graph complexes built out of the \emph{edges} of the graph rather than the vertices using the language of graph category theory (See \cites{miyata,caputi2024finite,caputi2024weak}, for instance). As previously stated, it will turn out to be much more natural to work with the vertex set of a cograph rather than its edges, and so the results of those works can be viewed as genuinely disjoint from this work.

\subsection{The representation theory of combinatorial categories} \label{sec:categorical}

Having completed our background discussion on the more ``classical" combinatorial and algebraic portions of this work, we now take our time to discuss the key technical underpinnings of our primary results.

\begin{definition}\label{def:Catrep}
Let $\mathcal{C}$ be a category, and let $k$ denote any (commutative, unital) ring. Then a \textbf{representation of $\mathcal{C}$} (over $k$) is a functor $V:\mathcal{C} \rightarrow k-\text{Mod}$, where $k$-Mod is the category of $k$-modules.

For much of what we do, the ring $k$ will not play a major role. By consequence, we will frequently just refer to representations of $\mathcal{C}$ without specifying the ring that they are over.

If $A$ is an object of $\mathcal{C}$, then we will write $V_A$ for the valuation of $V$ at $A$. We will also frequently refer to the \textbf{induced maps}, or \textbf{transition maps}, as the images of morphisms of $\mathcal{C}$ under $V$.

The category of $\mathcal{C}$-modules with natural transformations as morphisms is an abelian category, with kernels, cokernels, and direct sums defined point-wise on objects. For this reason we will use module-theoretic language such as \textbf{submodule} and \textbf{quotient module} freely.
\end{definition}

For those more familiar with the more typical representation theory of groups, the above definition is a direct generalization. Indeed, viewing a group $G$ as itself being a category with a single object, a representation of $G$ is precisely a functor from this category to the category of $k$-modules.

The representation theory of categories has existed for many decades to this point (see \cite{webb2007and} for a survey), though it has seen an explosion of interest in the last ten years or so due to its connection with representation stability theory \cites{church2013representation,churchFIMod}. This ``new wave" of representation theoretic literature primarily focuses on the representation theory of what are usually referred to as \textbf{combinatorial categories} \cite{sam2017grobner}. Though not really a precise concept, a combinatorial category is one whose objects and morphisms encode concrete combinatorial data. For instance:

\begin{enumerate}
    \item The category FI of finite sets and injective maps. The representation theory has been studied in \cites{churchFIMod,church2014fi,wilson2018introduction}.

    \item Expanding the prior example, categories of finite sets with ``decorated" injections. For instance, the category FI$_d$ whose morphisms are injections with a $d$-coloring on their complements \cites{ramos2017generalized,alpert2020generalized}, and FIM$^+$, whose morphisms are a pair of an injection with a choice of perfect matching on the compliment \cite{miller2019higher}.

    \item The opposite category FS$^{op}$ of finite sets with surjective maps. The representation theory has been studied in \cites{proudfoot2017configuration,tosteson2021stability}.

    \item The opposite category of (rooted) trees with edge contractions. More generally, the opposite category of graphs with bounded genus and edge contractions, $\mathcal{G}_g^{op}$. The representation theory has been studied in \cites{barter2015noetherianity,proudfoot2019functorial,proudfoot2022contraction,miyata}.
\end{enumerate}

In this work we will be expanding on things proven in \cite{KR}. More specifically, we will consider representations of the category $\mathsf{Co}$.

Just as with group representation theory, one must determine a ``nice" collection of representations to focus our attention on.

\begin{definition}
    Let $\mathcal{C}$ be a category, and $V$ a representation of $\mathcal{C}$. We say that $V$ is \textbf{finitely generated} if  $V_A$ is finitely generated as a $k$-module for every $A$ and there exists a finite collection of objects $A_1,\ldots,A_n$ in $\mathcal{C}$, such that for any object $A$ of $\mathcal{C}$, the module $V_A$ is generated by the images of the modules $V_{A_i}$ under the various induced maps  $A_i \rightarrow A$ 
\end{definition}

\begin{example}
    The following example will become important to us moving forward. For any cograph $G$, one may associate a $k$-module $kV_G$ by linearizing the vertex set. This naturally extends to a $\Co$-module, $kV_\bullet$, which is finitely generated by the single vertex cograph. More generally, for any fixed integer $i$, the wedge power $\bigwedge^ikV_\bullet$ is also finitely generated by the cographs with exactly $i$ vertices. The same arguments apply to the linearizations of (wedge powers of) the edge set.
\end{example}

\begin{theorem}[Noetherianity for Cograph representations, \cite{KR}]\label{coNoeth}
Assume that $k$ is a Noetherian ring, and let $V$ be a finitely generated $\mathsf{Co}$-module over $k$. Then all submodules of $V$ must also be finitely generated.
\end{theorem}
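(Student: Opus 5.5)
The plan is to use the Gröbner-theoretic machinery of Sam--Snowden \cite{sam2017grobner}. That machinery reduces Noetherianity of representations of a category to combinatorial properties of an auxiliary category: it should be directed, with a quasi-Gröbner structure, and its representable "principal projectives" should have well-quasi-ordered monomials.

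\textbf{Step 1: rigidify by ordering.} Cographs have nontrivial automorphisms, so $\Co$ is not directed. Following the standard trick (as for $\FI$ versus $\OI$), introduce the category $\PCot$ of \emph{planar} (ordered) cotrees: the children of every internal vertex carry a linear order, and morphisms are label-respecting homeomorphic embeddings that also preserve this order in the appropriate sense. This makes every object have trivial automorphisms. We then check that the forgetful functor $\PCot \to \Cot \simeq \Co$ of Theorem \ref{cotreeIsCograph} has property (F) of Sam--Snowden. Property (F) says: for each object $x$ of $\Co$ there are finitely many objects $y_i$ of $\PCot$ and morphisms $x \to \Phi(y_i)$ such that every morphism $x \to \Phi(y)$ factors through one of them via a map $y_i\to y$. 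Intuitively, the image of a cotree in a bigger one can be planarized in only finitely many ways, up to the ambient ordering. Given (F), Noetherianity of $\PCot$-representations implies Noetherianity of $\Co$-representations.

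\textbf{Step 2: Gröbner property for $\PCot$.} For a fixed planar cotree $x$, the set $\bigsqcup_y \Hom(x,y)$ must carry an admissible well-order compatible with postcomposition. We would obtain it by encoding a morphism as the target tree with the image vertices marked, and ordering lexicographically via a canonical word encoding of planar trees.

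\textbf{Step 3: well-quasi-ordering.} This is the main obstacle. The poset $|\mathcal{P}_x|$ of morphisms out of $x$, ordered by "$f\le g$ if $g=h\circ f$", must be well-quasi-ordered. Encode a morphism $x\to y$ as the tree $y$ with each vertex labeled by $0$, $1$, or $l$, together with the datum of which leaf of $x$ (if any) it is the image of. This uses the finite label set $S\times(V_x\sqcup\{\ast\})$. We then need a planar version of Kruskal's Tree Theorem \ref{Kruskal}, which follows from Higman's lemma applied to ordered children sequences in Nash-Williams' minimal bad sequence argument. A label-preserving homeomorphic embedding between two such decorated trees should then yield a factorization $g = h\circ f$.

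The delicate point is ensuring that the Kruskal embedding actually respects cotree structure. In particular, the alternating $0/1$ conditions and "least common ancestor" labels must be preserved, so that the embedding is a genuine cotree morphism. For this I would strengthen the embedding by labeling each vertex also with its own label, and argue that the infimum of images is forced to have the correct label. If this fails, I would instead encode via the cograph and its adjacency, which Theorem \ref{cotreeIsCograph} lets us translate back.
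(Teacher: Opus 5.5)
Your proposal is correct and follows essentially the paper's route. You cover $\Co$ by the directed category $\PCot$ of planar cotrees, use depth-first/lexicographic orders for the Gröbner condition, and get the well-quasi-ordering from a planar, finitely-labeled Kruskal theorem via Nash-Williams' minimal bad sequences and Higman's lemma; in the paper this is the weight-zero case of showing $\PCot$ is a strongly G-category for Theorem~\ref{mainTech}, and Theorem~\ref{coNoeth} follows by letting the variables act by zero. The delicate point you flag resolves itself, because the Nash-Williams construction sends roots to roots and distinct branches into distinct branches in order, so the embeddings preserve meets and hence least-common-ancestor labels; just be sure your admissible order compares targets by size before comparing lexicographically, so that it is genuinely a well-order.
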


\begin{example}
    As an elementary, but important, application of this theorem, consider the $\mathsf{Co}$-module $T$, defined on objects by
    \[
    T_G = \mathbb{Z} \text{ for all cographs $G$},
    \]
    and defined on full embeddings by assigning the identity map. If one takes any infinite collection of cographs $\mathcal{F}= \{G_\alpha\}_{\alpha in I}$, then there is a submodule $V^{\mathcal{F}} \subseteq V$ obtained by setting
    \[
    T^\mathcal{F}_G = \begin{cases} \mathbf{Z} &\text{ if $G_\alpha$ is induced in $G$ for some $\alpha$}\\
    0 &\text{else.}\end{cases}
    \]

    By our Noetherianity theorem, it follows that $T^{\mathcal{F}}$ is finitely generated, as $T$ is clearly finitely generated by the single vertex graph. In particular, there must be a \emph{finite} subcollection $\mathcal{F}' \subseteq \mathcal{F}$ such that for any $G_\alpha \in \mathcal{F}$, $G_\alpha$ contains a member of $\mathcal{F}'$ as an induced subgraph. Thus, $\mathcal{F}$ cannot be an anti-chain with respect to the induced subgraph order.

    This example illustrates why prior works have described these types of categorical representation Noetherianity statements as \textbf{categorifications} of more classical well-quasi-order theorems.
\end{example}

While the Noetherianity theorem is already powerful enough to prove a number consequences in combinatorial topology, for our purposes we will need to strengthen it in the following way.

\begin{definition}\label{def:cographPolyRing}
    Let $A_{|\bullet|}$ denote the functor from $\mathsf{Co}$ to the category of $k$-algebras which is defined on objects by sending a cograph to its vertex polynomial ring. Because the maps of $\mathsf{Co}$ are injective on vertices by definition, the induced maps on $A_{|\bullet|}$ are naturally defined.

    An \textbf{$A_{|\bullet|}$-module} $V$ is a $\mathsf{Co}$-module such that for every cograph $G$, $V_G$ carries the structure of a $A_{|G|}$-module, and for every full embedding $\phi:G \hookrightarrow G'$ and every vertex $v$ of $G$, the diagram,
    \[
    \begin{CD}
        V_G @>>\phi> V_{G'}\\
        @VV\cdot x_vV   @VV\cdot x_{\phi(v)}V\\
        V_G @>> \phi>  V_{G'}
    \end{CD}
    \]
    commutes. \footnote{obviously the words ``graded" or ``multi-graded" can be added to this definition without much effort. In these cases we require that the induced maps preserve whatever grading is imposed.}

    We say that an $A_{|\bullet|}$-module $V$ is \textbf{finitely generated} if $V_{G}$ is a finitely generated $A_{|G|}$-module for every cograph $G$, and there exists a finite list of cographs $G_1,\ldots,G_n$ such that for any cograph $G$, $V_G$ is generated (as an $A_{|G|}$-module) by the images of $V_{G_i}$ under the transition maps.
\end{definition}

Our primary technical theorem expands the above Noetherianity statement \ref{coNoeth} by showing that submodules of finitely generated $A_{|\bullet|}$-modules are always themselves finitely generated. Note that any finitely generated $\mathsf{Co}$-module can be upgraded to a finitely generated $A_{|\bullet|}$-module by making the variables act trivially. It follows that this new Noetherianity statement directly implies Theorem \ref{coNoeth}

\begin{remark}
    One of the most attractive features of $\FI$-modules over fields of characteristic 0 is the way in which the irreducible representations of the grades pieces grow \cite{churchFIMod}. In a very recent paper \cite{scarabotti2024representation}, Scarabotti provided a description of the irreducible representations associated to the automorphism groups of (finite) rooted trees. This description was in terms of a kind of ``tree partition," and extended the very classical bijection between irreducible representations of the symmetric groups and integer partitions. In progress work of the third author and Joshua Birns will show that modules (in characteristic 0) over the category of rooted trees display similar multiplicty-stability behaviors as $\FI$-modules.

    In Section \ref{sec:basics}, we discussed the equivalence between the category of cographs, and the category of cotrees. This equivalence allows one to consider cographs as a kind of labelled (rooted) tree, and its morphisms as a kind of tree automorphism that preserves the labels. Now while that might seem to imply that cograph autmorphism groups are \emph{proper} subgroups of tree automorphism groups, this is actually not the case. This is because cotree labels are entirely determined by whether or not you are a leaf, and what distance you are from the root. Both of these features are always preserved by any tree autmorphism.

    All of the above is to say, being a finitely generated module over $\Co$ (in characteristic 0) implies a kind of multiplicity stability result that directly extends the original multiplicity result of $\FI$-modules. One should keep this in mind when observing the various applications of Section \ref{sec:app}.
\end{remark}

\section{The proof of the main technical theorem ~\ref{mainTech}} 

\subsection{FI-concrete categories and representation stability}

In order to prove our primary technical theorem, we will need to introduce new machinery developed by Laudone and Snowden \cite{laudone}. Equivalent machinery under a different name and notation was considered at around the same time by Miyata and the third author \cite{miyata}, though the approach of Laudone and Snowden is considerably more compact and easily presented, so we will use it here.

\begin{definition}
    We say that a pair $(\mathcal{C},|\bullet|)$ of a category $\mathcal{C}$ with a functor $|\bullet|:\mathcal{C} \rightarrow \FI$ is \textbf{$\FI$-concrete}, if the functor $|\bullet|$ is faithful and \textbf{conservative}, in that the only maps of $\mathcal{C}$ that it sends to isomorphisms in $\FI$ are isomorphisms of $\mathcal{C}$. A functor between $\FI$-concrete categories $\Phi$ is itself said to be \textbf{concrete} if there are natural isomorphisms
    \[
    |A| \cong |\Phi(A)|
    \]  

    In all cases that follow, the functor $|\bullet|$ should be obvious from context. As such, we will frequently just refer to $\mathcal{C}$ as being $\FI$-concrete without explicitly mentioning $|\bullet|$.
\end{definition}

One may think of $\FI$-concrete categories as combinatorial categories whose objects can be naturally realized as finite sets, in such a way that one does not lose too much of the underlying combinatorics by doing so. For instance, the category $\mathsf{Co}$ is $\FI$-concrete, where $|G|$ is defined to be the vertex set of $G$. Obviously $\FI$ itself is $\FI$-concrete by setting $|\bullet|$ to be the identity functor.

\begin{definition}
    Let $\mathcal{C}$ be an $\FI$-concrete category and fix a field $k$. Then the \textbf{polynomial ring over $\mathcal{C}$} is the functor $A_{|\bullet|}:\mathcal{C} \rightarrow k-\text{Alg}$ defined on objects by
    \[
    A_{|B|} = k[x_{b} \mid b \in |B|]
    \]

    A \textbf{module} over $A_{|\bullet|}$ is defined analogously to Definition \ref{def:cographPolyRing}. For any object $B$ of $\mathcal{C}$, we define the \textbf{free $A_{|\bullet|}$-module relative to B} by the assignment
    \[
    F_B(C) = A_{|C|}^{\oplus |\Hom_\mathcal{C}(B,C)|}.
    \]
    More generally, an $A_{\bullet}$-module is said to be \textbf{free} if it is isomorphic to a direct sum of modules that are free relative to some collection of objects. In this case, we refer to these objects as the \textbf{generators} of the free module.

    We say that an $A_{|\bullet|}$-module $M$ is \textbf{finitely generated} if it is surjected onto by a free module with only finitely many summands. The generators of this free module then become the generators of the module $M$.
\end{definition}

In their seminal work \cite{sam2017grobner}, Sam and Snowden introduced a kind of Gr\"obner theory for combinatorial categories. This theory provided an incredibly powerful and truly combinatorial way to prove that the representation theory of a given combinatorial category was Noetherian. In moving from proving that the algebra of functors $\mathcal{C} \rightarrow k-Mod$ has a Noetherian property, to proving that modules over $A_{|\bullet|}$ do, extra care must be paid. The underlying ideas that motivated \cite{laudone}, and earlier works such as \cite{miyata} and \cite{nagel2019fi}, was that for a given $A_{|\bullet|}$-module $V_{\bullet}$, one must construct Gr\"obner bases -- in the classical polynomial sense -- for every module $V_B$ simultaneously, while maintaining compatibility with a Gr\"obner basis -- in the Sam-Snowden sense -- for the underlying categorical representation. One accomplishes this by considering the objects of the category $\mathcal{C}$ that have been weighted in an appropriate sense.

\begin{definition}
    Let $\mathcal{C}$ denote an $\FI$-concrete category, with objects $B, C,$ and $D$ and a morphism $f:B \rightarrow C$. A \textbf{weight} on $C$ is a set function $\gamma:|C| \rightarrow \mathbb{N}$, whereas a \textbf{weight on $C$ relative to $f$} is a pair $(\gamma,f)$, where $\gamma$ is a weight on $C$. Given weights $\gamma,\delta$ on $C$ and $D$, respectfully, a \textbf{weighted morphism} from $C$ to $D$ is a map $\phi:B \rightarrow C$ such that,
    \[
    \beta(b) \leq \gamma(|\phi|(b)),
    \]
    for all $b \in |B|$. Finally, if $(\gamma,f)$ and $(\delta,g)$ are weights on $C$ and $D$, respectively, relative to $f:B \rightarrow C$ and $g:B \rightarrow D$, respectively, a \textbf{weighted morphism} between them is a weighted morphism $\phi:C \rightarrow D$ such that $g = \phi \circ f$.
    
    We write $\mathcal{M}(\mathcal{C})$ for the set of (isomorphism classes of) weighted objects on $\mathcal{C}$. This set is equipped with the natural structure of a poset, by setting $[B,\beta] \leq [C,\gamma]$ if and only if there is a weighted morphism from $(B,\beta)$ to $(C,\gamma)$\footnote{As noted by \cite{laudone}, anti-symmetry of this partial order explicitly uses the fact that $|\bullet|$ is conservative.} For any fixed object $B$ of $\mathcal{C}$, we write $\mathcal{M}(\mathcal{C},B)$ for the set of (isomorphism classes) of weighted objects relative to morphisms with domain $B$. As with $\mathcal{M}(\mathcal{C})$, all of these sets carry a natural poset structure.
\end{definition}

The important thing to note about the above definitions is that a given weighted object $(B,\beta)$ can be equivalently thought of as a monomial in $A_{|B|}$. In this context, the poset of weighted objects $\mathcal{M}(\mathcal{C})$ is a divisibility poset of monomials across many different -- albeit connected through the induced maps coming from $\mathcal{C}$ -- polynomial rings. On the other hand, relatively weighted objects can be thought of as monomials in the free module $F_B$

\begin{definition}
    We say that an $\FI$-concrete category $\mathcal{C}$ is a \textbf{G-category}, if:
    \begin{enumerate}
        \item it is possible to assign to each set $|B|$ a total order, in such a way that the induced maps $|\phi|:|B| \rightarrow |C|$ are all order-preserving and,
        \item the poset $\mathcal{M}(\mathcal{C})$ of weighted objects in $\mathcal{C}$ is well-quasi-ordered.
    \end{enumerate}

    We say that an $\FI$-concrete category $\mathcal{C}$ is a \textbf{strongly G-category} if it is a G-category in the above sense with the additional conditions that:
    \begin{enumerate}
        \item $\mathcal{C}$ is a \textbf{Gr\"obner category} in the sense of \cite{sam2017grobner}. That is to say, for every object $B$, there exist total orders on the set of isomorphism classes of morphisms with domain $B$ which are compatible with post-composition in the obvious way.
        \item For every object $B$ of the category, the poset $\mathcal{M}(\mathcal{C},B)$ of relatively weighted objects in $\mathcal{C}$ is well-quasi-ordered.
    \end{enumerate}

    We call an $\FI$-concrete category $\mathcal{D}$ a \textbf{QG-category} (resp. \textbf{strongly QG-category}) if there exists a G-category (resp. strongly G-category) $\mathcal{C}$ and an essentially surjective concrete functor $\mathcal{C} \rightarrow \mathcal{D}$ (resp. essentially surjective full concrete functor). In this case we will often refer to $\mathcal{C}$ as a \textbf{G-cover} for $\mathcal{D}$.
\end{definition}

\begin{remark}
    In the work \cite{laudone}, Laudone and Snowdon only consider G and QG categories, though they note that their notion can be combined with the Gr\"obner concepts from \cite{sam2017grobner} to produce stronger results. The definition given above for the "strong" versions of these ideas is precisely what they mean by this. We will see below what the practical difference is between strongly G and G categories.
\end{remark}

In their seminal work \cite{sam2017grobner}, Sam and Snowden introduce what are known as Gr\"obner categories. While the definition (and nomenclature) are very similar to that of G-categories, they are not the same thing. Indeed, the authors of \cite{laudone} provide an example of an $\FI$-concrete Gr\"obner category which is not a G-category.

It is perhaps also important to note that whether or not an $\FI$-concrete category is a G-category (or a QG-category) depends on both the underlying category as well as the functor $|\bullet|$. For instance, the primary technical theorem of this work will show that the category $\mathsf{Co}$ with $|G|$ defined to be the set of vertices of $G$, is an $\FI$-concrete QG-category (See Theorem \ref{mainTech}). On the other hand, if one selects $|\bullet| = ||\bullet||$ to denote the \emph{edge set} of the cograph $G$, then the resulting category is $\FI$-concrete, but not QG. To see why this is, we need to first state the main theorem from \cite{laudone}.

\begin{theorem}[Laudone and Snowden, Theorem 1.4]\label{thm:QGNoeth}
If an $\FI$-concrete category $\mathcal{C}$ is either a G-category or a QG-category, then all submodules (i.e ideals) of $A_{|\bullet|}$ are finitely generated. More generally, if $\mathcal{C}$ is either a strongly G-category or a strongly QG-category, then all submodules of any finitely generated $A_{|\bullet|}$-module are finitely generated. 
\end{theorem}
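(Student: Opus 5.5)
The plan is to adapt the classical Gr\"obner-basis proof of the Hilbert basis theorem, with the well-quasi-order on monomials provided by $\mathcal{M}(\mathcal{C})$ (or by $\mathcal{M}(\mathcal{C},B)$ for modules). I would first treat G-categories and then strongly G-categories. The QG cases would then be deduced by pulling back along a G-cover.

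\emph{G-categories, ideals.} Fix the total orders on each $|B|$ guaranteed by the definition, and put the lexicographic monomial order on $A_{|B|}$. Since every $|\phi|$ is an order-preserving injection, the induced map $A_{|B|}\to A_{|C|}$ renames variables in an order-preserving way. Hence it sends monomials to monomials, preserves the monomial order, and therefore sends leading terms to leading terms. A monomial in $A_{|C|}$ is exactly a weighted object $(C,\gamma)$. Moreover, $(B,\beta)\le (C,\gamma)$ holds exactly when the monomial $x^\beta$ is carried by some $\phi:B\to C$ to a monomial dividing $x^\gamma$.

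Now let $I\subseteq A_{|\bullet|}$ be an ideal. For each weighted object $(C,\gamma)$, let $L(C,\gamma)\subseteq k$ be the ideal of leading coefficients of elements of $I_C$ with leading monomial $x^\gamma$, with $0$ adjoined. The assignment $(C,\gamma)\mapsto L(C,\gamma)$ is order-preserving from $\mathcal{M}(\mathcal{C})$ to the poset of ideals of $k$. The target satisfies the ascending chain condition because $k$ is Noetherian. A standard lemma says that the product of a well-quasi-order with a poset having ACC has no infinite bad sequences in the relevant sense. Hence there are finitely many weighted objects $(B_i,\beta_i)$ and finitely many elements $f_{ij}\in I_{B_i}$ with leading monomial $x^{\beta_i}$ with the following property: for every $(C,\gamma)$, the ideal $L(C,\gamma)$ is generated by the leading coefficients of the $f_{ij}$ pushed forward along those $\phi$ that witness $(B_i,\beta_i)\le(C,\gamma)$.

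The usual division argument then finishes this case. Given $g\in I_C$, some $k$-combination of monomial multiples of pushed-forward $f_{ij}$ cancels the leading term of $g$. Subtracting it strictly lowers the leading monomial. This process terminates because lex order on the finitely many variables of $A_{|C|}$ is a well-order. So the $f_{ij}$ generate $I$.

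\emph{Strongly G-categories, modules.} Monomials of the free module $F_B(C)$ are pairs $(f:B\to C,\gamma)$, i.e.\ relatively weighted objects. Order these by first comparing the morphisms via the Gr\"obner order on morphisms out of $B$, then comparing weights lexicographically. Compatibility of both orders with postcomposition makes this a monomial order preserved by the transition maps. The same argument as above, with $\mathcal{M}(\mathcal{C},B)$ in place of $\mathcal{M}(\mathcal{C})$, shows that every submodule of $F_B$ is finitely generated. A finite direct sum of free modules is handled the same way, since a finite disjoint union of well-quasi-orders is again a well-quasi-order (order first by summand index). Finally, a submodule of a finitely generated module $M$ is the image of its preimage under a surjection $\bigoplus F_{B_i}\to M$, so it is finitely generated as well.

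\emph{QG-categories.} Let $\Phi:\mathcal{C}'\to\mathcal{D}$ be a G-cover. Because $\Phi$ is concrete, $\Phi^*A_{|\bullet|}$ is the polynomial ring over $\mathcal{C}'$. Hence a submodule $N\subseteq M$ pulls back to a submodule $\Phi^*N\subseteq\Phi^*M$. Suppose $\Phi^*N$ is generated in objects $B'_i$. Then $N$ is generated in the objects $\Phi(B'_i)$: every object of $\mathcal{D}$ is isomorphic to some $\Phi(D')$ by essential surjectivity, and morphisms of $\mathcal{C}'$ map to morphisms of $\mathcal{D}$. In the ideal case this completes the argument. In the module case we must also know that $\Phi^*M$ is finitely generated. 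For this, fullness gives surjections $\Hom_{\mathcal{C}'}(B',C')\to\Hom_{\mathcal{D}}(\Phi B',\Phi C')$, so $\Phi^*F_{\Phi B'}$ is a quotient of $F_{B'}$. Therefore $\Phi^*M$ is a quotient of a finitely generated free module.

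I expect the main obstacle to be the Noetherian-coefficient step, namely the combination of the well-quasi-order on monomials with ACC on the coefficient ideals. Closely tied to it is the care needed with isomorphism classes, including checking that a divisibility witness $\phi$ actually transports leading terms correctly. The remaining steps are routine once the monomial orders are known to be compatible with the transition maps.
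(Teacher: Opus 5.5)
Your proposal is correct and follows the same route as the paper's proof (which is sketched and defers to \cite{laudone}). That route is: simultaneous monomial orders coming from the compatible total orders on the sets $|B|$; reduction to leading terms, controlled by the well-quasi-order on $\mathcal{M}(\mathcal{C})$, or on $\mathcal{M}(\mathcal{C},B)$ with the morphism order compared first; the usual passage from free modules to finitely generated ones; and pullback along a (full) G-cover. The leading-coefficient ideals you use for a general Noetherian $k$, and your explicit use of fullness to keep the pulled-back module finitely generated, are details the paper leaves to \cite{laudone}.
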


\begin{proof}
    To start, assume that our category is a G-category. The first half of this theorem is precisely Theorem 1.4 of \cite{laudone}. The crux of that proof is that the first condition of being a G-category precisely allow one to define monomial orders on all $A_{|B|}$ simultaneously, which allows one to reduce the problem to monomial ideals of $A_{|\bullet|}$. The second condition then implies that any monomial ideal in $A_{|\bullet|}$ is finitely generated.

    In the strongly G case, the above argument can be precisely replicated for any of the free modules $F_B$. Indeed, one can define monomial orders by first using the provided total order on $\mathcal{M}(B)$ given by the first condition of being a strongly G category, and then using the total order given by the first condition of being a G-category. The second condition of being a strongly G category then allows one to show that monomial submodules of $F_B$ are always finitely generated. Usual algebra tricks can then be used to conclude that all submodules of finitely generated modules are finitely generated, as we have shown this to be the case for all free modules.

    To show the necessary results for (strongly) QG-categories, one argues as in \cite{laudone} or \cite{sam2017grobner}, that the existence of a G (or Gr\"obner) cover allows one to reduce the question of finite generation to the restriction of the module to the action of that cover.
\end{proof}

\begin{example}
    As a first example of this theorem, consider $\FI$ itself as an $\FI$-concrete category with $|\bullet|$ being the identity functor. It is not hard to show that $\FI$ is a (strongly) QG-category, with G-cover being given by the category $\OI$ of totally ordered finite sets with order preserving injections. In this case, \ref{thm:QGNoeth} implies the following: if $I_1 \leq I_2 \leq \ldots$ is a symmetric chain of ideals, in the sense that each $I_n \leq k[x_1,\ldots,x_n]$ is preserved by the symmetric group $S_n$, then there exists a non-negative integer $m$ and a finite collection of polynomials $f_1,\ldots f_r \in k[x_1, \ldots, x_m]$ such that $I_n$ is generated by the $S_n$--orbits of $f_1, \ldots f_r$ for all $n \geq m$. A version of this theorem has been known as far back as Cohen \cite{cohen1967laws}, though its formulation in terms of the category $\FI$ is as recent as \cite{nagel2019fi}.\\

    As a non-example, consider the category $\mathsf{Co}$, with functor $|\bullet| = ||\bullet||$ given by the \emph{edge set}. To show that this $\FI$-concrete category is not QG, we will argue that the polynomial ring over this category is not Noetherian in the sense of Theorem \ref{thm:QGNoeth}.

    For each $n$, the polynomial ring evaluated on complete graph $K_n$ is given by $A_{||K_n||} = k[x_{i,j}]$. Within these rings one has the chain of ideals $I_2 \leq I_3 \leq \ldots$, where $I_n$ is generated inductively by $I_{n-1}$ and the monomial $x_{1,2}x_{2,3}\cdots x_{n-1,n}x_{n,1}$. One can see that this chain of ideals cannot be finitely generated, as for each $n$, $I_n$ is generated by monomials and their $S_n$ translates. A monomial is contained in a monomial ideal if and only if it is divisible by one of the generators, and in this case the newly added generator to $I_n$ is never divisible by any of the translates of the generators for $I_{n-1}$.
\end{example}

\subsection{Proving that $\mathsf{Co}$ is a strongly QG-category}

With the background of the prior section, proving Theorem \ref{thm:MainTech} now reduces to showing that $\mathsf{Co}$ is a strongly QG-category. Theorem \ref{cotreeIsCograph} tells us that $\mathsf{Co}$ is equivalent to the category of cotrees $\mathsf{Cot}$. In fact, this equivalence can be extended to an equivalence of FI-concrete categories by Defining $|\bullet|$ on $\mathsf{Cot}$ to be the functor which assigns to every cotree its set of leaves. It therefore suffices to prove that the FI-concrete category $\mathsf{Cot}$ is a strongly QG-category. To accomplish this, we must design a G-cover for $\mathsf{Cot}$.

\begin{definition}
    A \textbf{planar rooted tree} is a rooted tree, such that at every internal vertex the children of that vertex are totally ordered. The vertices of any planar rooted tree are totally ordered by beginning at the root and applying a depth-first labeling, using the provided total ordering of children at each vertex.
    
    A \textbf{homeomorphic embedding between planar rooted trees} is a homoeomorphic embedding of the underlying rooted trees which is also monotone with respect to the aforementioned total ordering on vertices.

    We will write $\PCot$ for the category of planar rooted \emph{cotrees} with homeomorphic embeddings.
\end{definition}

Our primary objective in this section will be to show that $\PCot$ is a strongly G-category. Note that the forgetful functor $\PCot \rightarrow \Cot$ clearly realizes $\Cot$ as a strongly QG-category in this case.

Our choice of G-cover is largely inspired by the original work of Barter \cite{barter2015noetherianity}, which was later used in similar contexts at various times by Caputi, Collari, Knudsen, Miyata, Proudfoot, and the third author, \cite{proudfoot2019functorial,proudfoot2022contraction,miyata,KR,caputi2024weak}. In the work \cite{laudone}, Laudone and Snowden consider categories of what they call Boron trees, and use a slightly different G-cover. In that work, the covering category still embeds the tree in the plane, but makes sure to do so in a way in which the leaves appear on the unit circle.

We begin our analysis of $\PCot$ by proving that its collection of relatively weighted objects forms a well-quasi-order.

\begin{proposition} 
For any planar rooted tree $B$, The collection of relatively weighted objects $\cM(\PCot,B)$ is a well-quasi-order.
\end{proposition}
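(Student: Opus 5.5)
We reduce the statement to Kruskal's Tree Theorem (Theorem \ref{Kruskal}), applied with a suitably enriched well-quasi-ordered label set. An element of $\cM(\PCot,B)$ is the class of a pair $(\gamma,f)$, where $f:B\to C$ is a morphism of planar rooted cotrees and $\gamma:|C|\to\mathbb{N}$ is a weight on the leaves of $C$. We encode each such pair as a labeled planar rooted tree. The underlying tree is $C$. A vertex $x$ receives the label
\[
\lambda(x)=(\psi(x),\ \epsilon(x),\ w(x))\in S\times(V_B\sqcup\{\ast\})\times\mathbb{N}.
\]
Here $\psi(x)\in S=\{0,1,l\}$ is the cotree label. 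The entry $\epsilon(x)$ equals $b$ if $x=f(b)$ and equals $\ast$ if $x$ is not in the image of $f$. The entry $w(x)$ equals $\gamma(x)$ if $x$ is a leaf and $0$ otherwise.

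We order $S\times(V_B\sqcup\{\ast\})$ by equality. Since $B$ is fixed this set is finite, so it is a wqo. We order $\mathbb{N}$ by $\leq$, and give the product the product order. Finite products of wqos are wqos, so the label set $Q$ is well-quasi-ordered.

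\textbf{Step 1: from label-respecting embeddings to weighted morphisms.} We check that a label-respecting embedding $\phi:(C,\lambda)\to(C',\lambda')$ is a weighted morphism $(\gamma,f)\to(\gamma',f')$ in $\PCot$.

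The first label coordinate is preserved exactly, so $\phi$ respects the cotree labels; in particular leaves go to leaves.

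The second coordinate forces $\phi(f(b))=f'(b)$ for every $b$, which is exactly the condition $f'=\phi\circ f$. It also forces vertices outside the image of $f$ to land outside the image of $f'$. That is harmless but automatic.

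The third coordinate gives $\gamma(x)\le\gamma'(\phi(x))$ on leaves, which is the weight condition.

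\textbf{Step 2: planarity (the main obstacle).} Kruskal's theorem only produces homeomorphic embeddings of rooted trees. We need embeddings that are monotone for the depth-first order. There are two routes.

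The first route is to invoke the planar form of Kruskal's theorem, as in Barter's argument and \cite{KR}. Nash-Williams' minimal-bad-sequence proof goes through verbatim when the children of each vertex form a finite word. The step "forest of children embeds into forest of children" is replaced by Higman's lemma on words, which yields order-preserving subsequence embeddings. This produces embeddings that send the ordered children of $x$ into distinct subtrees of children of $\phi(x)$ in increasing order, and hence are depth-first monotone.

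We expect to run this argument ourselves in the planar, labeled setting: a minimal bad sequence, the wqo of subtrees, then Higman's lemma over the wqo of subtrees. We must also check two further points. Embeddings must be rooted, which we arrange by working with rooted homeomorphic embeddings; Kruskal's theorem is indifferent to this, as the paper notes. The embedding must also be a cotree morphism, meaning the label of the first common ancestor of two leaves is preserved. This holds because homeomorphic embeddings preserve meets.

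\textbf{Step 3: conclusion.} Any infinite sequence in $\cM(\PCot,B)$ yields a sequence of $Q$-labeled planar trees. By Step 2 some pair $i<j$ admits a planar label-respecting embedding. By Step 1 this embedding is a weighted morphism, so the sequence is not bad and $\cM(\PCot,B)$ is a wqo.

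Descending chains are excluded because a weighted morphism cannot decrease the number of vertices or the total weight. Antisymmetry then follows from conservativity, as in \cite{laudone}.
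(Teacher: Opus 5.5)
Your proposal is correct and follows the paper's argument: encode $f$ by vertex labels in $V_B\sqcup\{\ast\}$ together with the leaf weights, so that weighted morphisms become label-respecting planar homeomorphic embeddings, and then apply the planar labeled Kruskal theorem via Nash-Williams' minimal bad sequences and Higman's lemma on the ordered sequences of root branches. One small caution: morphisms of $\PCot$ need not be rooted, and if you ran the minimal bad sequence argument with rooted embeddings throughout, the step showing that the branches form a wqo would break, since a branch does not embed rootedly into its own tree; so use unrooted embeddings, and send root to root only when assembling the branch embeddings at the end, as the paper does.
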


\begin{proof}
As is frequently the case with these types of proofs, we will approach this proposition using the standard Nash-Williams technique of minimal bad sequences \cite{Nash-Williams_1963}.

Note that a weighted object in $\PCot$ can be thought of as a planar rooted cotree, whose leaves have all been assigned natural number weights. On the other hand, if $f:B \rightarrow C$ is a morphism of planar rooted trees with domain $B$, then this data is also easily encoded as a labeling on the vertices of $C$ by using the (finite and unvarying) label set $V_B \cup \{\emptyset\}$. Indeed, one can use the vertices of $B$ to indicate the image of $\phi$, and the empty set marker to indicate vertices not in the image. The weighted arrows in $\PCot$ then become homeomorphic embeddings between the underlying rooted trees which, 
\begin{enumerate}
    \item preserve the label of all internal vertices and map leaves to leaves,
    \item preserve the total ordering on vertices given by the depth-first labeling, and
    \item map a leaf with a given weight to one with a weight at least as large.
\end{enumerate}

With the above perspective in mind, what we are trying to prove falls out as a consequence of the planar labeled version of Kruskal's Tree Theorem. The planar, but unlabeled, Kruskal's Theorem was originally proven in \cite{barter2015noetherianity}, whereas the labeled version was originally recorded in \cite{proudfoot2022contraction}. Both proofs are ultimately inspired by Nash-Williams' seminal work \cite{Nash-Williams_1963}. We briefly write an outline here for completion.

Suppose that $\cM(\PCot)$ is \emph{not} a well-quasi-order. Then there exists an antichain of (planar rooted) cotrees

\begin{align*}
T_1, T_2, T_3,\ldots 
\end{align*}

which is \emph{minimal} in the sense that, for any $n \geq 1$, $|T_n|$ is smallest among the $n$-th terms of any antichain whose first $n-1$ terms are $T_1,\ldots,T_{n-1}$.

Write $\mathcal{B}_i$ to denote the (finite) sequence of trees
\[
\mathcal{B}_i = (B_{ij})_j
\]
where $B_{ij}$ is the $j$-th branch from the root (according to the planar structure) of the $i$-th tree in the antichain, and let 
\[
\mathcal{B} = \{B_{i,j}\}_{i,j}.
\]

Since we supposed that the above antichain is minimal, $\mathcal{B}$ cannot contain an infinite anti chain, as $B_{ij} < T_i$ for each $i,j$. Thus, $\mathcal{B}$ is a well-quasi order. In particular, by Higman's lemma, the set of finite ordered sequences valued in $\mathcal{B}$ is well-quasi-ordered under the relation of embedding of words. Now, consider the sequence of sequences:
\[
\mathcal{B}_1,\mathcal{B}_2,\mathcal{B}_3,\ldots
\]

In any well-quasi-order, an infinite sequence must have some infinite chain as a subsequence. So, by Higman's lemma, there is some infinite subchain of this sequence:
\[
\mathcal{B}_{i_1} \leq \mathcal{B}_{i_2}\leq \mathcal{B}_{i_3}\leq \dots
\]
Suppose that, for each pair of branch sets $\mathcal{B}_{i_m}$ and $\mathcal{B}_{i_n}$ in this chain, the roots of the trees $T_{i_m}$ and $T_{i_n}$ have different labels. This is impossible as there are only 2 choices for the labeling of the root. Thus, there exists some $m<n$ where $\mathcal{B}_{i_m} \leq \mathcal{B}_{i_n}$ and the labeling on the roots is the same of the corresponding trees. In particular, we can embed each branch of $T_{i_m}$ into a branch of $T_{i_n}$ while preserving our planar embedding, and so we can reconstruct a map of cotrees $T_{i_m}\to T_{i_n}$ by taking the respective embeddings on the branches and sending the root to the root. Thus $T_{i_m} \leq T_{i_n}$, a contradiction.
\end{proof}

Hence, $\PCot$ is a $G$-cover of $\Co$, and by Theorems 2.5 and 2.9 of \cite{laudone} we arrive at our main theorem.

\begin{theorem}
\label{mainTech}
The polynomial ring functor $A_{|\bullet|} : \mathsf{Co}\to \mathsf{Alg}_k$ is Noetherian. 
\end{theorem}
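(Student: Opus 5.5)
The plan is to deduce the statement from Theorem \ref{thm:QGNoeth} by showing that $\Co$, with $|G|$ the vertex set, is a strongly QG-category with G-cover $\PCot$. The first step is to transport the problem along the equivalence of Theorem \ref{cotreeIsCograph}. That equivalence is concrete once $|\bullet|$ on $\Cot$ is taken to be the set of leaves, since the leaves of a cotree are exactly the vertices of the associated cograph. So $A_{|\bullet|}$-modules over $\Co$ and over $\Cot$ correspond, and it suffices to treat $\Cot$. I then check that the forgetful functor $\PCot \to \Cot$ is concrete, essentially surjective (every cotree admits a planar structure) and full. Fullness needs an argument: given a morphism of cotrees $\phi: T \to T'$ and a planar structure on $T'$, I would put on $T$ the planar structure pulled back along $\phi$. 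This works because distinct children of a vertex of $T$ map into distinct branches below the image vertex, so their images are ordered in the depth-first order of $T'$. Hence, after choosing the planar structure on the source this way, every morphism lifts, which is what the cover argument of \cite{laudone} requires.

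Next I verify that $\PCot$ is a strongly G-category, taking the four conditions in turn.

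\textbf{Total order.} Order the leaves of each planar cotree by the depth-first order. Morphisms of $\PCot$ are by definition monotone for this order, so the induced maps on leaves are order-preserving.

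\textbf{Gröbner condition.} For a fixed $B$, a morphism with domain $B$ is determined by where it sends the vertices of $B$, and monotonicity makes these images an increasing sequence in the depth-first order of the target. I would order morphisms out of $B$ lexicographically on these image sequences, breaking ties by comparing targets via a fixed enumeration of isomorphism classes. Compatibility with post-composition follows because post-composition is monotone. I expect this to be routine, mirroring the argument of \cite{barter2015noetherianity} for planar rooted trees.

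\textbf{Well-quasi-order on relatively weighted objects.} This is the preceding Proposition on $\cM(\PCot,B)$. I encode a morphism $f: B \to C$ as a labeling of the vertices of $C$ by $V_B \cup \{\emptyset\}$, and leaf weights as labels in $\mathbb{N}$. The set $\{0,1,l\} \times (V_B\cup\{\emptyset\}) \times \mathbb{N}$ is a well-quasi-order, being a product of well-quasi-orders. The planar labeled Kruskal theorem then applies.

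\textbf{Well-quasi-order on weighted objects.} $\cM(\PCot)$ is the case $B=\emptyset$ (or one can simply drop the $V_B$ labels), so it follows from the previous point.

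Having established this, Theorem \ref{thm:QGNoeth} gives that every submodule of a finitely generated $A_{|\bullet|}$-module over $\Cot$, hence over $\Co$, is finitely generated. That is exactly Noetherianity of $A_{|\bullet|}$.

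I expect the main obstacle to be the Kruskal step, specifically making sure that the embedding built in the minimal-bad-sequence argument respects all the constraints at once:
\begin{itemize}
\item internal labels must match exactly;
\item leaves must go to leaves, and a leaf's weight must not decrease;
\item the marked vertices must map to the correct marked vertices, so that $g=\phi\circ f$;
\item the depth-first order must be preserved, which holds because Higman's lemma yields order-preserving embeddings of branch sequences.
\end{itemize}
One subtle point is the root. When the branch sequences of two roots embed but the roots carry different $\{0,1\}$ labels, or different $V_B$-markings, a pigeonhole argument over the finitely many root labels is needed. A further case arises when a branch must embed as a proper subtree rather than at the root. Here I would strengthen the induction so that the embedding is allowed to send a whole tree into a single branch of the target.
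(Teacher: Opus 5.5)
Your proposal is correct and follows essentially the same route as the paper. Both transport to $\Cot$ via the leaf functor, use the forgetful functor $\PCot\to\Cot$ as the cover, reduce the well-quasi-ordering of $\cM(\PCot,B)$ to a labeled planar Kruskal theorem (minimal bad sequence, Higman's lemma, pigeonhole on root labels), and conclude with Laudone--Snowden. One small correction: that forgetful functor is not literally full, since swapping the two leaves of the cotree of two non-adjacent vertices does not lift to a morphism of a fixed planar structure to itself; your pull-back argument instead proves the weaker lifting property (Sam--Snowden's property (F)), which is what the cover argument actually uses.
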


\section{Applications}\label{sec:app}

\subsection{Commutative algebra of cographs}

We begin our discussion of some applications of Theorem \ref{mainTech} to concerns in commutative algebra. To start, we first state a slight generalization of our main theorem, which will allow us to work with both edge-ideals and binomial edge ideals.

\begin{theorem}\label{generalizedMain}
Let $c$ be a non-negative integer. Then the $c$-th tensor power of the polynomial ring functor
\[
A^{\otimes c}_{|\bullet|}
\]
is Noetherian
\end{theorem}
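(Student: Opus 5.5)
The plan is to reduce to the same Laudone--Snowden machinery used for Theorem \ref{mainTech}. The ring $A^{\otimes c}_{|G|} = k[x_{v,1},\ldots,x_{v,c} \mid v \in V_G]$ is a polynomial ring in which each vertex carries $c$ variables. Monomials in it correspond to weight functions $\gamma:|G| \rightarrow \mathbb{N}^c$, with divisibility corresponding to the componentwise order on $\mathbb{N}^c$. So I will rerun the argument with weights valued in $\mathbb{N}^c$ in place of $\mathbb{N}$. Equivalently, $A^{\otimes c}_{|\bullet|}$ is the polynomial ring over the $\FI$-concrete category $\Co$ with the modified functor $G \mapsto V_G \times [c]$. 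However, that functor is not conservative in the required way, so I prefer to work directly with $\mathbb{N}^c$-weights.

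\textbf{Monomial orders.} The first G-category condition is unaffected. The depth-first total order on the leaves of a planar cotree is preserved by morphisms of $\PCot$. I will refine it lexicographically with the index $i \in [c]$, ordering the variables $x_{v,i}$ first by $v$ and then by $i$. This gives compatible monomial orders on all the rings $A^{\otimes c}_{|G|}$ at once. The Gr\"obner-category condition on morphisms out of a fixed $B$ concerns only the category, not the weights, so it carries over verbatim from the case $c=1$.

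\textbf{Well-quasi-order.} The substantive step is to show that the posets $\cM^{(c)}(\PCot)$ and $\cM^{(c)}(\PCot,B)$ of $\mathbb{N}^c$-weighted (relatively weighted) planar cotrees are well-quasi-ordered. By Dickson's lemma, $\mathbb{N}^c$ with the componentwise order is a well-quasi-order. In the proof of the Proposition above, the only property of $\mathbb{N}$ used is that it is a well-quasi-order. The leaf weights act as labels from a wqo, while the internal labels and the marking of the image of $f:B\to C$ lie in the finite set $\{0,1\}\times(V_B\cup\{\emptyset\})$ with the equality order. I will therefore label each vertex by an element of the product of $\{0,1,l\}\times (V_B\cup\{\emptyset\})$ (equality order) with $\mathbb{N}^c$ (componentwise order), which is again a wqo. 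The planar labeled Kruskal theorem with wqo labels, quoted in that proof from \cite{proudfoot2022contraction}, then gives the result. The minimal-bad-sequence argument goes through unchanged: Higman's lemma on the branch sequences, pigeonhole on the root label, and gluing the branch embeddings.

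\textbf{Conclusion and expected difficulty.} Once the strongly G property holds for $\PCot$ with $\mathbb{N}^c$-weights, the forgetful functor $\PCot \rightarrow \Cot \simeq \Co$ is an essentially surjective full concrete functor, and the argument of Theorem \ref{thm:QGNoeth} gives Noetherianity of $A^{\otimes c}_{|\bullet|}$. The main obstacle is the bookkeeping in Theorem \ref{thm:QGNoeth}: its proof in \cite{laudone} is phrased for $\mathbb{N}$-weights, and I must check that it only uses that monomials form a wqo compatible with the transition maps and the monomial order. If that check proves awkward, my fallback is to encode $\mathbb{N}^c$-weights as $\mathbb{N}$-weights on a modified planar cotree. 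Each leaf $v$ would be replaced by a $0$-labelled internal node with $c$ ordered leaf children, and functoriality would have to be verified via the planar order.
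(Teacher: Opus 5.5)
Your proposal is correct and is essentially the paper's argument: the paper passes to the $\FI$-concrete structure $G \mapsto |G|\times[c]$, whose weights are exactly your $\mathbb{N}^c$-valued leaf weights ordered componentwise, and reruns the same Nash-Williams/planar Kruskal argument with this extra finite, fixed data. One small correction: for $c\geq 1$ that functor is in fact faithful and conservative (a full embedding that is bijective on $V_G\times[c]$ is bijective on $V_G$, hence an isomorphism), so you can invoke Laudone--Snowden directly, and neither the bookkeeping check nor the fallback encoding is needed.
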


\begin{proof}
    The idea here is essentially present in \cite[Remark 2.2]{laudone}. Consider the $\FI$-concrete category on $\Co$ with functor $|\cdot|_c: \Co \rightarrow \FI$
    \[
    |G|_c = |G| \times [c],
    \]
    where $|G|$ is the set of vertices of $G$. One can check that the proof of \ref{mainTech} provided in the previous section can be easily adapted to prove that this new $\FI$-concrete category is QG. Indeed, our weights now assign a (fixed and finite) list of natural numbers to each vertex, rather than a single number. This extra bit of information, being finite and fixed, can be accounted for using the same Nash-Williams argument.
\end{proof}

For us, the case $c = 2$ will be the most relevant, as the binomial edge ideals are submodules of
\[
A^{\otimes 2}_{|G|} = k[x_v,y_w \mid v,w \in |G|],
\]
however we stress that all of what follows will hold for any integer $c \geq 0$.

We will be primarily focused on the behaviors of syzygy modules of $A^{\otimes c}_{|\bullet|}$-modules. Because the theory of syzygies over polynomial rings is so enhanced by the existence of minimal resolutions, we will assume in this section that \textbf{all of the $A^c_{\bullet}$-modules being considered are over a field $k = \mathbb{F}$.}

Recall that, if $\F[x_1,\ldots,x_n]$ is a polynomial ring over a field $\F$, and $M$ is a module over this polynomial ring, then the \textbf{syzygy modules} of $M$ can be realized as the terms in a minimal free resolution of $M$. By Nakayama's lemma, the generators of these terms can be viewed as the basis vectors of the Tor vector spaces $\Tor_i(M,\F[x_1,\ldots,x_m]/\mathfrak{m})$, where $\mathfrak{m}$ is the ideal generated by the variables $x_i$. This latter description is ideal in that it allows one to count syzygies without needing to explicitly construct a minimal free resolution.

Importantly, if one considers the polynomial ring as graded (by degree) or multi-graded (by setting $x_i$ to have grade $e_i$), then all of the above can be adapted to respect this extra structure. In particular, the vector space $\Tor_i(M,\F[x_1,\ldots,x_m]/\mathfrak{m})$ can be viewed as a graded, or multi-graded, vector space.

\begin{definition}
Let $G$ be a cograph, and $M$ a graded  module over $A_{|G|}$. Then for any $i \geq 0$ and integer $a$, we define the Betti number
\[
\beta^{i,a}_G(M) = \dim_k \Tor_i(M,k=\F[x_1,\ldots,x_m]/\mathfrak{m})_a.
\]
If instead $M$ is multi-graded, then for any $i \geq 0$ and non-negative integral vector $\mathbf{a}$, with coordinates indexed by $|G|$, we define the multi-graded Betti number
\[
\beta^{i,\mathbf{a}}_G(M) = \dim_k \Tor_i(M,\F[x_1,\ldots,x_m]/\mathfrak{m})_{\mathbf{a}}
\]
\end{definition}

\begin{remark}
    Of course, all of the above can be adapted to the tensor powers $A_{|G|}^{\otimes c}$ as well. In this setting there are a number of natural gradings, including by total degree, multi-degree indexed by the vertices of $G$, and multi-degree indexed by the variables of $A_{|G|}^{\otimes c}$. In all cases, the morphisms induced by full embeddings $G \rightarrow G'$ will respect these gradings. In particular, one may consider the categories of (multi-)graded modules over the (multi-)graded polynomial ring over $\Co$. These categories will have a Noetherian property as a consequence of Theorem \ref{generalizedMain}. 
\end{remark}

\begin{proposition}
    For any fixed integers $i,c \geq 0$, and any $A_{|G|}^{\otimes c}$-module $M$, the assignment
    \[
    G \mapsto \Tor_i(M_G,A_{|G|}^{\otimes c}/\mathfrak{m})
    \]
    defines a finitely generated $\Co$-module over $\F$. This continues to hold if one considers any of the natural (multi)-gradings on $A_{|G|}$, and works with correspondingly graded $\Co$-modules over $\F$.
\end{proposition}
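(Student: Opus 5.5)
The plan is to compute $\Tor_i$ from a resolution built in the category of $A^{\otimes c}_{|\bullet|}$-modules, and then pass to $\Co$-modules by tensoring with the residue field pointwise. I read the statement with $M$ a \emph{finitely generated} $A^{\otimes c}_{|\bullet|}$-module; this is the setting of Theorem \ref{generalizedMain}, and the conclusion would fail for arbitrary $M$.

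\textbf{Step 1: a resolution by finitely generated free modules.} Since $M$ is finitely generated, there is a surjection $F_0 \to M$, where $F_0$ is a finite direct sum of the free modules $F_B$ relative to cographs $B$. By Theorem \ref{generalizedMain}, its kernel is again finitely generated. So we can surject a finite free $F_1$ onto the kernel, and iterate. This produces a resolution
\[
\cdots \to F_2 \to F_1 \to F_0 \to M \to 0
\]
of $A^{\otimes c}_{|\bullet|}$-modules, in which every $F_j$ is a finite sum of relatively free modules.

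\textbf{Step 2: evaluate and tensor pointwise.} Kernels and images are computed pointwise, so evaluating at a cograph $G$ gives an exact sequence. Each $F_B(G) = (A^{\otimes c}_{|G|})^{\oplus |\Hom(B,G)|}$ is a free $A^{\otimes c}_{|G|}$-module, since $\Hom(B,G)$ is finite. Hence $F_\bullet(G)$ is a free resolution of $M_G$, and
\[
\Tor_i\bigl(M_G, A^{\otimes c}_{|G|}/\mathfrak{m}\bigr) = H_i\bigl(F_\bullet(G) \otimes A^{\otimes c}_{|G|}/\mathfrak{m}\bigr).
\]
For a full embedding $G \to G'$, the induced ring map sends variables to variables. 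It therefore carries $\mathfrak{m}_G$ into $\mathfrak{m}_{G'}$ and induces a map $\F \to \F$ that is the identity. So $G \mapsto F_j(G) \otimes A/\mathfrak{m}$ is a complex of $\Co$-modules, and its homology is a $\Co$-module.

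We must also check that this $\Co$-module structure is independent of the chosen resolution. This follows from the standard comparison-theorem argument carried out in the abelian category of $A^{\otimes c}_{|\bullet|}$-modules, whose objects $F_B$ are projective by a Yoneda argument. Equivalently, the structure agrees with the naturally induced maps on Tor.

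\textbf{Step 3: finite generation.} For $F_B$ we have
\[
F_B(G) \otimes A/\mathfrak{m} \cong \F^{\oplus |\Hom(B,G)|},
\]
which is exactly the principal projective $\Co$-module generated in degree $B$. Hence each term $F_j \otimes A/\mathfrak{m}$ is a finitely generated $\Co$-module. The cycles in degree $i$ form a $\Co$-submodule of $F_i \otimes A/\mathfrak{m}$, so they are finitely generated by Theorem \ref{coNoeth}. Their quotient by the boundaries, namely $\Tor_i$, is then finitely generated. Each $\Tor_i(M_G,\F)$ is also finite-dimensional, because $M_G$ is finitely generated over a polynomial ring.

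\textbf{Graded case.} Here one repeats the argument with graded free modules $F_B(-\mathbf{a})$. The gradings are preserved by the transition maps, and Theorem \ref{generalizedMain} applies in the graded category by the preceding remark. Each graded piece, such as the multidegree-$\mathbf{a}$ part, is then a subquotient of a finitely generated $\Co$-module.

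The main obstacle is Step 2: verifying that the pointwise complex honestly computes the Tor functorially in $G$. Concretely, one needs the relatively free modules to be projective, or at least resolution-independent up to natural isomorphism. This is exactly where the Yoneda-type description of $F_B$ is used.
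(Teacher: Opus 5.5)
Your proposal is correct and is essentially the paper's argument. The paper also deduces the result from Theorem~\ref{generalizedMain}, which gives a resolution by finitely generated free $A^{\otimes c}_{|\bullet|}$-modules, together with the fact that tensoring with $A^{\otimes c}_{\bullet}/\mathfrak{m}$ preserves finite generation, and it treats the graded case as immediate because transition maps preserve gradings. You make explicit what the paper leaves implicit: that $M$ must be finitely generated, that $F_B\otimes\F$ is the principal projective $\F[\Hom(B,-)]$, and that Noetherianity of $\Co$-modules is needed to pass from the chain modules to the subquotient $\Tor_i$.
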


\begin{proof}
    This is an immediate consequence of Theorem \ref{generalizedMain}, as well as the fact that the functor $- \otimes A_{\bullet}^{\otimes c}/\mathfrak{m}$ from $A_{\bullet}^{\otimes c}$-modules over $\F$ to $\Co$-modules over $\F$ clearly preserves finite generation. Because all morphisms induced from $\Co$ preserve the natural (multi-)gradings on $A_{\bullet}^{\otimes c}$, the second part of the proposition is also immediate.
\end{proof}

\begin{corollary} \label{ComAlgCor}
    Let $M$ denote an $A_{|\bullet|}$-module, equipped with the usual polynomial ring grading. Then for any $i \geq 0$, there exists an $N \gg 0$, such that for all cographs $G$, $\beta_G^{i,j}(M) = 0$ for all  $j \geq N$. 
    
    If we assume instead that $M$ is equipped with the natural $|G|$ multi-grading of $A_{|\bullet|}$, then for all $i$ there exists a finite list of multi-degrees $\mathbf{a}_1,\ldots,\mathbf{a}_n$ such that for any cograph $G$ and any multi-index $\mathbf{a}$ not on this list, $\beta^{i,\mathbf{a}}_G(M) = 0$
\end{corollary}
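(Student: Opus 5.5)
The plan is to derive both statements from the preceding Proposition, which says that for each fixed $i$ the (graded or multigraded) assignment $G \mapsto \Tor_i(M_G, A_{|G|}/\mathfrak{m})$ is a finitely generated $\Co$-module over $\F$. We implicitly assume, as the Proposition requires, that $M$ is finitely generated as an $A_{|\bullet|}$-module. Fix $i$. By finite generation there is a finite list of cographs $G_1,\ldots,G_n$ such that for every cograph $G$, $T_G := \Tor_i(M_G,A_{|G|}/\mathfrak{m})$ is spanned over $\F$ by the images of the $T_{G_j}$ under the maps induced by full embeddings $G_j \hookrightarrow G$. By the definition of a finitely generated $\Co$-module, each $T_{G_j}$ is a finitely generated, hence finite-dimensional, $\F$-vector space.

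For the singly graded statement, use that each $T_{G_j}$ is a finite-dimensional graded vector space. It is therefore concentrated in finitely many degrees, so there is an $N$ exceeding every degree in which any $T_{G_j}$ is nonzero. The transition maps preserve total degree, because a full embedding sends $x_v \mapsto x_{\phi(v)}$, and the Proposition asserts the graded version. Hence the degree-$a$ part of $T_G$ is spanned by images of the degree-$a$ parts of the $T_{G_j}$, which vanish for $a \ge N$. This gives $\beta_G^{i,a}(M)=0$ for all $G$ and all $a\ge N$.

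For the multigraded statement the degrees live in $\mathbb{N}^{|G|}$, which varies with $G$, so ``finite list of multidegrees'' has to be read up to relabeling along embeddings. A multidegree $\mathbf{a}$ of $G$ should be identified with the weighted vertex set, or the monomial $x^{\mathbf{a}}$. The statement is then that $\mathbf{a}$ must be the pushforward $\phi_*(\mathbf{a}_k)$ of one of finitely many multidegrees $\mathbf{a}_k$ on the $G_j$, extended by zero off the image of $\phi$. Each finite-dimensional multigraded $T_{G_j}$ has finitely many nonzero multidegrees; collect all of them as the list $\mathbf{a}_1,\ldots,\mathbf{a}_n$. A full embedding $\phi:G_j\to G$ carries the $\mathbf{b}$-graded piece into the $\phi_*\mathbf{b}$-graded piece, so $T_{G,\mathbf{a}}$ is spanned by images of pieces $T_{G_j,\mathbf{b}}$ with $\phi_*\mathbf{b}=\mathbf{a}$. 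If $\mathbf{a}$ is not of this form for any listed multidegree, the piece vanishes and $\beta^{i,\mathbf{a}}_G(M)=0$.

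The main obstacle is not the algebra, which is immediate, but making the multigraded formulation precise. One has to check that the Proposition really yields finite generation in the category of multigraded $\Co$-modules, where degrees are transported along embeddings. This requires that $\Tor$ commutes with the transition maps in a multidegree-compatible way, which holds because $\phi$ maps $\mathfrak{m}_G$ into $\mathfrak{m}_{G'}$ and sends monomials to monomials. Granting that, the finite-dimensionality of the generating pieces gives both bounds.
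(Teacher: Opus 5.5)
Your argument is correct and follows the paper's intended route. The paper gives no separate proof and treats the corollary as an immediate consequence of the preceding Proposition; the bounds then come, as you say, from the finitely many finite-dimensional generating pieces $\Tor_i(M_{G_j},\F)$ together with the (multi)degree-preserving transition maps. Your two clarifications are hypotheses the paper leaves implicit, and you handle both correctly: that $M$ must be assumed finitely generated, and that the multigraded list only makes sense up to pushforward $\phi_*$ along full embeddings.
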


Of course, the above corollary can also be stated in terms of the various (multi)graded modules over $A_{\bullet}^{\otimes c}$. The way that one should think about Corollary \ref{ComAlgCor} is that in any module over the cograph polynomial ring, in any fixed index the syzygies in that index are ultimately controlled by the syzygies coming from a finite list of cographs. 

\begin{remark}
    In \cite{fieldsteel2025minimal}, Fieldsteel and Nagel consider the case of the polynomial ring over $\FI$. More specifically, they study instances wherein one can build a minimal free resolution of an $A_{|\bullet|}$-module, which is coherent in the sense that it restricts to a minimal free resolution for each $n \geq 0$. It is perhaps an interesting avenue of future study to determine whether their methods can produce similar results in our setting of the polynomial ring over $\Co$.
\end{remark}

One of the main results of \cite{kahle2019binomial} is a strict bound on the regularity of the binomial edge ideal of a cograph in terms of its number of vertices. It is proven that this quantity is at most $\frac{2}{3}n$. The methods of that work are mostly combinatorial, and lean heavily on the fact that cographs can be built in a simple recursive way. While our methods are not as well suited to proving these kinds of precise numerical bounds, Corollary \ref{ComAlgCor} does imply a uniformity of the shape of syzygies which can appear across all cographs, and across much more general families of ideals. One can think of the proofs of these results resulting from a kind of \emph{categorification} of the recursive methods of \cite{kahle2019binomial}.

\subsection{Graph complexes of cographs}

In this section we begin to discuss a number of topological implications of our main technical Theorem \ref{mainTech}. Our first result is an immediate consequence of our main technical theorem.

\begin{corollary}\label{cor:graphCom}
    If $\Delta_G$ is any vertex monotone graph complex, and $i \geq 0$ is any integer, then the $\Co$-module
    \[
    G \mapsto H_i(\Delta_G;\Z)
    \]
    is finitely generated. In particular, for every vertex monotone graph complex $\Delta_\bullet$, there exists an integer $d_{i,\Delta}$ such that the exponent of torsion appearing in $H_i(\Delta_G;\Z)$, across all cographs $G$, must divide $d_{i,\Delta}$.
\end{corollary}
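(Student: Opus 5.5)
The plan is to realize $G \mapsto H_i(\Delta_G;\Z)$ as a subquotient of a finitely generated $\Co$-module and then invoke the Noetherianity of Theorem \ref{coNoeth}, which is implied by Theorem \ref{mainTech} by letting the variables act trivially.

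First, pass to simplicial chains. Since $\Delta_\bullet$ is a functor to simplicial complexes, the simplicial chain groups $C_i(\Delta_G;\Z)$ form a $\Co$-module $C_i(\Delta_\bullet)$. The boundary maps commute with the induced simplicial maps, so they are morphisms of $\Co$-modules. Hence the cycles $Z_i(\Delta_\bullet) = \ker \partial_i$ form a $\Co$-submodule of $C_i(\Delta_\bullet)$, and the boundaries $B_i(\Delta_\bullet)$ form a submodule of $Z_i(\Delta_\bullet)$. We then have
\[
H_i(\Delta_\bullet;\Z) = Z_i(\Delta_\bullet)/B_i(\Delta_\bullet)
\]
as $\Co$-modules.

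The key step is to show that $C_i(\Delta_\bullet)$ is a submodule of a finitely generated $\Co$-module. A basis element of $C_i(\Delta_G)$ is an oriented $i$-simplex, which is an $(i+1)$-element subset of $V_G$. So $C_i(\Delta_G)$ embeds into $\bigwedge^{i+1}\Z V_G$. One subtlety is that a simplicial map $\Delta_G \to \Delta_{G'}$ might not coincide with the restriction of the vertex injection $V_G \hookrightarrow V_{G'}$. I would use the vertex-monotone hypothesis to ensure compatibility: the simplicial map is induced on vertex sets, and since full embeddings are injective on vertices, the natural reading is that the map is the restriction of the embedding. Because it is injective on vertices, no simplex is degenerated. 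With this in place, $C_i(\Delta_\bullet)$ is a $\Co$-submodule of $\bigwedge^{i+1}\Z V_\bullet$, which is finitely generated by the cographs on $i+1$ vertices, as noted in the earlier example.

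By Theorem \ref{coNoeth}, $C_i(\Delta_\bullet)$ is therefore finitely generated. Its submodule $Z_i(\Delta_\bullet)$ is finitely generated as well, and so is the quotient $H_i(\Delta_\bullet;\Z)$, since quotients of finitely generated modules are finitely generated.

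For the torsion statement, let $G_1,\ldots,G_n$ be the generating cographs. Each $H_i(\Delta_{G_j};\Z)$ is a finitely generated abelian group, so its torsion subgroup has a finite exponent $e_j$. Set $d_{i,\Delta} = \prod_j e_j$, or the least common multiple of the $e_j$. I would handle the torsion subgroup directly: torsion classes also form a $\Co$-submodule $T \subseteq H_i(\Delta_\bullet;\Z)$, which is finitely generated by Noetherianity, say by cographs $G'_1,\ldots,G'_m$. Each $T_G$ is then generated by images of the finite groups $T_{G'_k}$, and group homomorphisms preserve the property of being killed by a given integer. So the exponent of $T_G$ divides the least common multiple of the exponents of the $T_{G'_k}$, for every cograph $G$.

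I expect the main obstacle to be the step identifying $C_i$ functorially with a submodule of $\bigwedge^{i+1}\Z V_\bullet$, that is, confirming that the induced simplicial maps are injective on vertices. If the definition allowed non-injective maps, one would instead take the sub-$\Co$-set of $(i+1)$-subsets, which is still finitely generated, and accept collapsed simplices becoming zero. That situation is still a quotient of a permutation module on $(i+1)$-subsets and so is finitely generated.
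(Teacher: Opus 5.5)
Your proposal is correct and is essentially the paper's argument: realize $H_i(\Delta_\bullet;\Z)$ as a subquotient of the finitely generated $\Co$-module $\bigwedge^{i+1}\Z V_\bullet$ and apply Noetherianity. You carry it out more carefully than the paper's sketch, which writes $\bigwedge^i$ where $\bigwedge^{i+1}$ is needed, claims generation by edgeless cographs, and leaves the torsion bound unargued; your torsion-submodule argument settles that bound.

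Drop two stray remarks. First, the initial guess $d_{i,\Delta}=\prod_j e_j$ read off from the generators of $H_i$ fails, because classes of infinite order can map to torsion. Second, the closing fallback is false: compatibility of $\Delta_G\to\Delta_{G'}$ with the vertex embedding is genuinely needed, and the paper assumes it implicitly as well. For instance, take $\Delta_G$ to be two isolated vertices $a_G,b_G\in V_G$, with isomorphisms acting by $a\mapsto a$, $b\mapsto b$ and every non-bijective embedding collapsing both to $a_{G'}$; this is a functor whose $H_0$ is not finitely generated.
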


\begin{proof}
    The approach here is very common (see \cite{miyata}, for instance). The idea is that we may view $H_i(\Delta_G,\Z)$ as a subquotient of the wedge power $\bigwedge^i\Z V_G$. By our assumption, the full embeddings of the category $\Co$ turn the assignment 
    \[
    G \mapsto \bigwedge^i\Z V_G
    \]
    into a $\Co$-module. Certainly $G \mapsto \Z V_G$ is itself finitely generated as a comodule, so to show that $G \mapsto \bigwedge^i\Z V_G$ is finitely generated, it suffices to show that tensor powers of the vertex $\Co$-module is finitely generated. Indeed, it isn't hard to see that the $i$-th tensor power of the vertex module is generated by cographs of at most $i$ vertices without edges. The Noetherianity theorem \ref{mainTech} now implies our result.
\end{proof}

\begin{remark}
    One might notice from the above proof that an identical statement can be proven for complexes built out of the edges of the graph. Indeed, in \cite{jonsson2005simplicial,miyata} consider complexes built out of collections of edges, which they call \emph{edge} monotone. Examples of such complexes include matching complexes, which have been a source of extensive research for many years \cite{ShareshWachs,WachsSurvey}. The reason we have chosen to frame our discussion in terms of specifically \emph{vertex} monotone complexes is because of the fact that Theorem \ref{mainTech} fails if one tries to set $|\bullet|$ to be the edge functor. In other words, while we only used Noetherianity of $\Co$-modules to prove the above statement, the follow-up results in this section will require the full strength of Theorem \ref{mainTech}, and will therefore not be applicable to the edge case. 
\end{remark}

\begin{example}
    Of course, the results of Corollary \ref{cor:graphCom} mean very little if these homology groups are just always eventually zero! To show that this isn't the case, consider the vertex-monotone graph complex which associates to every cograph itself viewed as a simplicial complex. This can be equivalently viewed as the 1-skeleton of the clique complex.

    In this case the only non-zero homology group is $H_1$, which is free abelian of rank $|E_G|-|V_G|+1$. In fact, we can say something stronger. Because every induced cycle in a cograph is either a triangle or a square, this particular $\Co$-module can be seen to be generated by the triangle and the square.
\end{example}

It is a well-studied phenomenon that the homology groups of the matching complexes of complete graphs and complete bipartite graphs can contain non-trivial torsion \cite{ShareshWachs}. In particular, it is known that these kinds of vertex/edge monotone complexes can admit torsion in their homology groups in the case of cographs. Moreover, per the conclusions of \cite{ShareshWachs}, it seems as if this torsion is exceptional difficult to find and analyize in these circumstances. Therefore, the consequence described in Corollary \ref{cor:graphCom} does contain non-trivial information.
 
One limitation of Corollary \ref{cor:graphCom} is that it only says anything in the cases wherein the homological index is fixed. In other words, it tells us comparatively little about things such as Euler characteristics or Poincare polynomials, which would require knowledge of the Betti numbers in homological indices relative to the order of $G$. In order to bridge this gap, we will require the theorems of the previous section.

\textbf{For the remainder of this section, we assume that $k$ is a field.}

\begin{definition}
    Let $\Delta$ denote an abstract simplicial complex on a vertex set $V$. Then the associated \textbf{Stanley-Reisner ideal} $I_\Delta$ is defined by
    \[
    I_\Delta = (x_{v_1}x_{v_2}\cdots x_{v_n} \mid \{v_1,\ldots,v_k\} \notin \Delta\} \subseteq k[x_v \mid v \in V] = k[V]
    \]
    
    Conversely, to any ideal of $k[V]$ which is generated by squarefree monomials, one can associate a simplicial complex.
\end{definition}

The correspondence between a simplicial complex and its Stanley-Reisner ideal is extremely robust in a number of different ways. For instance, it isn't hard to see that the topological dimension of the simplicial complex will always be one less than the Krull dimension of the quotient ring $k[V]/I_\Delta$. In fact, more generally, the Hilbert series of the (coarsely) graded module  $k[V]/I_\Delta$ will always have a rational expression of the form
\[
\frac{\sum_i h_it^i}{(1-t)^d},
\]
where $d$ is one higher than the dimension of $\Delta$, and the $h_i$ form the $h$-vector of $\Delta$ \cite{miller2005combinatorial}.

For what we need, however, we will want to consider connections between the ideal $I_\Delta$, and the topological (co)homology of the simplicial complex $\Delta$. This is best illustrated by the celebrated formula of Hochster.

\begin{theorem}[Hochster's formula, Corollary 5.12 \cite{miller2005combinatorial}]
	Let $\Delta$ be a simplicial complex with corresponding Stanley-Reisner ideal $I_\Delta$, which we consider multi-graded by the vertex grading. The nonzero Betti numbers of $ I_\Delta $ lie only in square free multi-degrees. Namely, those multi-degrees $\mathbf{a}$ whose every entry is at most 1, and therefore correspond to a subset of $V_\Delta$. Furthermore,
	\[
		\beta^{i , \mathbf{a}}(I_\Delta) = \dim_{k} \tilde{H}^{|\mathbf{a}| - i - 2}( \Delta|_{\mathbf{a}} ; k)
	\] 
	where $\tilde{H}$ is reduced (co)homology , $|\mathbf{a}|$ is the sum of the entries of $\mathbf{a}$ (or the size of the corresponding set of vertices), and $\Delta|_{\mathbf{a}} = \{\tau \in \Delta\mid \tau \subseteq \mathbf{a}\}$. 
\end{theorem}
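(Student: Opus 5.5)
The plan is to compute the multigraded Betti numbers through the Koszul complex, resolving the residue field rather than $I_\Delta$. Write $S = k[x_v \mid v \in V]$ and $k[\Delta] = S/I_\Delta$.

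\textbf{Step 1: pass to the quotient ring.} From the short exact sequence $0 \to I_\Delta \to S \to k[\Delta] \to 0$, and since $S$ is free, we get
\[
\Tor_i(I_\Delta,k)_{\mathbf{a}} \cong \Tor_{i+1}(k[\Delta],k)_{\mathbf{a}} \quad \text{for all } i \ge 0 \text{ and } \mathbf{a} \neq 0.
\]
It therefore suffices to show that $\dim_k \Tor_j(k[\Delta],k)_{\mathbf{a}}$ vanishes unless $\mathbf{a}$ is squarefree, and that for squarefree $\mathbf{a}$ corresponding to $W \subseteq V$ it equals $\dim_k \tilde H^{|W|-j-1}(\Delta|_W;k)$. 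Substituting $j=i+1$ then gives the exponent $|\mathbf{a}|-i-2$ in the statement.

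\textbf{Step 2: the Koszul strand.} Compute $\Tor_j(k[\Delta],k)$ as $H_j$ of $K_\bullet = k[\Delta]\otimes_k \bigwedge^\bullet k^V$, the Koszul complex on the variables, with the multigrading in which $e_v$ has degree $e_v$. The degree-$\mathbf{a}$ strand in homological degree $j$ has a $k$-basis consisting of the elements $x^{\mathbf{a}-\sigma} e_\sigma$, where $\sigma \subseteq \operatorname{supp}(\mathbf{a})$, $|\sigma| = j$, and $x^{\mathbf{a}-\sigma} \ne 0$ in $k[\Delta]$. Because $I_\Delta$ is a squarefree monomial ideal, the last condition says exactly that $\operatorname{supp}(\mathbf{a}-\sigma) \in \Delta$.

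\textbf{Step 3: non-squarefree degrees vanish.} Suppose $a_v \ge 2$ for some vertex $v$. Then for every $\sigma$ the $v$-coordinate of $\mathbf{a}-\sigma$ is positive, so $\operatorname{supp}(\mathbf{a}-\sigma)$ does not change when $v$ is added to or removed from $\sigma$. Consequently the strand splits as $C \otimes (k e_v \to k)$, where the right-hand factor is the two-term complex with the identity map. This is a mapping cone of an identity, hence acyclic; I would verify this by writing down the contracting homotopy $x^{\mathbf{a}-\sigma}e_\sigma \mapsto \pm x^{\mathbf{a}-\sigma-e_v} e_{\sigma \cup v}$ for $v \notin \sigma$. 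This is the step where signs and bookkeeping need the most care, and I expect it to be the main obstacle, though it is routine.

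\textbf{Step 4: squarefree degrees give reduced cohomology.} Now let $\mathbf{a}$ be the indicator vector of $W$. Setting $\tau = W \setminus \sigma$, the basis elements of the strand are in bijection with faces $\tau \in \Delta|_W$, including $\tau = \emptyset$, with $|\tau| = |W|-j$. The Koszul differential sends $x^{\tau} e_\sigma$ to $\sum_{v\in\sigma} \pm x^{\tau\cup v} e_{\sigma\setminus v}$, keeping only the terms with $\tau \cup v \in \Delta$. This is exactly the coboundary of the augmented simplicial cochain complex of $\Delta|_W$, up to a sign twist on each basis element, which I would fix by choosing signs compatibly with a total order on $W$. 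Homological degree $j$ corresponds to cochain degree $|\tau|-1 = |W|-j-1$, so
\[
H_j(K_\bullet)_{\mathbf{a}} \cong \tilde H^{|W|-j-1}(\Delta|_W;k).
\]
Combining this with Step 1 yields the formula, and Step 3 shows that all other multidegrees contribute nothing.
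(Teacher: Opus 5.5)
The paper does not prove this statement. It imports it from Miller--Sturmfels and uses it as a black box in Corollary~\ref{finite_betti_cohom}, so there is no internal argument to compare against. What you give is the classical direct proof via multidegree strands of the Koszul complex, and it is correct. The following steps all check out:
\begin{itemize}
\item the shift $\Tor_i(I_\Delta,k)_{\mathbf a}\cong\Tor_{i+1}(k[\Delta],k)_{\mathbf a}$;
\item the monomial basis of the strand;
\item the acyclicity of the non-squarefree strands;
\item the identification of the squarefree strand with the augmented cochain complex of $\Delta|_W$, where $j=i+1$ produces the exponent $|\mathbf a|-i-2$.
\end{itemize}
Relative to the citation, your route is self-contained and makes visible that the only input is that $I_\Delta$ is a squarefree monomial ideal. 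It also gets the squarefree concentration and the cohomological formula from one computation. Some treatments instead pass through Koszul simplicial complexes of arbitrary monomial ideals and Alexander duality.

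There are small loose ends, all easy to close.
\begin{itemize}
\item Step~1 excludes $\mathbf a=\mathbf 0$, but there both sides vanish: $I_\Delta\subseteq\mathfrak m$ because $\emptyset\in\Delta$, and $\tilde H^{-i-2}(\{\emptyset\};k)=0$ for $i\ge 0$.
\item The signs you flag as the main obstacle in Step~3 are harmless. Set $h(m\,e_\sigma)=(m/x_v)\,e_v\wedge e_\sigma$ if $v\notin\sigma$, and $h=0$ if $v\in\sigma$. The derivation rule $\partial(e_v\wedge\omega)=x_v\omega-e_v\wedge\partial\omega$ then gives $\partial h+h\partial=\mathrm{id}$ on the whole strand. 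Dividing by $x_v$ never changes a support, since $a_v\ge 2$.
\item In Step~4, write all faces and all $e_\sigma$ in increasing order. Then the twist $\epsilon(\tau)=(-1)^{\sum_{u\in\tau}\#\{w\in W:\,w<u\}}$ makes $x^{\tau}e_{W\setminus\tau}\mapsto\epsilon(\tau)\,\tau^{*}$ an isomorphism of complexes.
\end{itemize}
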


Returning to our context, and applying the theorems of the previous section, we recover the following.

\begin{corollary}\label{finite_betti_cohom}
	Fix $i \geq 0 $ and a vertex monotone graph complex $\Delta_G$. There exists an integer $ n_{i} $, depending only on $i$, such that for any cograph $ G $, if $ \mathbf{a} \subseteq V_G$ with $ |\mathbf{a}| > n_{i} $, then 
	\[
		\dim_{k} \tilde{H}^{|\mathbf{a}| - i - 2}(\Delta_G |_{\mathbf{a}} ; k) = \dim_{k} \tilde{H}_{|\mathbf{a}| - i - 2}(\Delta_G |_{\mathbf{a}} ; k) = 0.
	\]

    In particular, for any vertex monotone graph complex $\Delta_G$ and any $i \geq 0$, there are at most finitely many cographs for which $\dim_{k} \tilde{H}_{|V_G| - i - 2}(\Delta_G ; k)$ is non-zero
\end{corollary}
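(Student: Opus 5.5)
We pass through Stanley--Reisner ideals so that Hochster's formula turns the topological statement into one about multigraded Betti numbers. Then we use the uniform finiteness of Corollary \ref{ComAlgCor}.

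\textbf{Step 1: the ideals form a submodule.} Given the vertex monotone graph complex $\Delta_\bullet$, set $I_G := I_{\Delta_G} \subseteq A_{|G|}$ for each cograph $G$. Before this, we harmlessly enlarge the vertex set of $\Delta_G$ to all of $V_G$: a vertex of $V_G$ not in $\Delta_G$ becomes a generator $x_v$ of $I_G$. This does not change the reduced homology of any restriction $\Delta_G|_{\mathbf{a}}$ when $\mathbf{a}$ is a set of genuine vertices.

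We then check that $G \mapsto I_G$ is an $A_{|\bullet|}$-submodule of $A_{|\bullet|}$. That is, for a full embedding $\phi: G \hookrightarrow G'$ we need $\phi(I_G) \subseteq I_{G'}$, equivalently that non-faces of $\Delta_G$ map to non-faces of $\Delta_{G'}$. Here I would use that $\phi$ identifies $G$ with the induced subgraph $\phi(G)$, so $\Delta_G$ should be compared with $\Delta_{G'}|_{\phi(V_G)}$. The functorial simplicial map only gives faces to faces, which is the wrong direction. I expect this to be the main obstacle.

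My first attempt is to interpret vertex monotonicity as saying that $\Delta_G$ is the restriction of $\Delta_{G'}$ to the image. This holds for every listed example governed by a hereditary property, such as the independence and clique complexes. If that is not available in general, I would instead apply the argument to the family of complements (Alexander duals) or to the monomial ideal generated by the minimal non-faces. I would choose whichever is hereditary and still computes the same homology via Hochster's formula.

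\textbf{Step 2: finite generation and bounded Betti degrees.} The submodule $I_\bullet$ of the finitely generated module $A_{|\bullet|}$ is finitely generated by Theorem \ref{mainTech}. Applying the multigraded form of Corollary \ref{ComAlgCor} to $M = I_\bullet$ in homological index $i$ gives a finite list of multidegrees outside of which $\beta^{i,\mathbf{a}}_G(I_G)$ vanishes for every cograph $G$. We let $n_i$ be the largest $|\mathbf{a}|$ on that list. Hochster's formula then states, for squarefree $\mathbf{a}\subseteq V_G$,
\[
\beta^{i,\mathbf{a}}_G(I_G) = \dim_k \tilde{H}^{|\mathbf{a}|-i-2}(\Delta_G|_{\mathbf{a}};k),
\]
so this dimension vanishes once $|\mathbf{a}|>n_i$. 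Over a field, the universal coefficient theorem identifies the cohomology and homology dimensions, which gives the first claim.

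\textbf{Step 3: the finiteness consequence.} Take $\mathbf{a}=V_G$, so that $\Delta_G|_{\mathbf{a}}=\Delta_G$. The first claim shows that $\tilde{H}_{|V_G|-i-2}(\Delta_G;k)\neq 0$ forces $|V_G|\le n_i$. Since there are only finitely many cographs on at most $n_i$ vertices up to isomorphism, only finitely many cographs can have nonzero homology in this degree.
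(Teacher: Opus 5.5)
Your route (Stanley--Reisner ideals, then the multigraded form of Corollary~\ref{ComAlgCor}, then Hochster's formula, then $\mathbf{a}=V_G$) is the derivation the paper intends. Steps~2 and~3 are correct \emph{provided} $G\mapsto I_{\Delta_G}$ is a submodule of $A_{|\bullet|}$. But Step~1 is a genuine gap, and it carries the whole content of the corollary.

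Functoriality only gives $\phi(\Delta_G)\subseteq\Delta_{G'}|_{\phi(V_G)}$. The inclusion $\phi(I_{\Delta_G})\subseteq I_{\Delta_{G'}}$ needs non-faces to go to non-faces, which is the reverse inclusion. Together the two amount to $\Delta_G=\Delta_{G'}|_{\phi(V_G)}$, an additional hypothesis and not a consequence of vertex monotonicity.

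Your fallbacks do not rescue this. The ideal generated by the minimal non-faces \emph{is} $I_{\Delta_G}$. For the Alexander dual, the image of a generator $x^{V_G\setminus\sigma}$ lies in $I_{\Delta_{G'}^\vee}$ only if $\phi(\sigma)\cup(V_{G'}\setminus\phi(V_G))$ is a face of $\Delta_{G'}$. This already fails for independence complexes along $K_1\hookrightarrow K_2$. Moreover, Hochster's formula for $I_{\Delta^\vee}$ computes degree-$(i-1)$ homology of links of $\Delta_G$, not $\tilde H^{|\mathbf a|-i-2}(\Delta_G|_{\mathbf a})$.

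In fact no argument can close Step~1, because the statement is false for general vertex monotone complexes. Let $\Delta_G$ be the complex of all proper subsets of $V_G$. Full embeddings are injective, so they send proper subsets to proper subsets, and this is functorial. For $G=K_n$ it is exactly Lov\'asz's neighborhood complex, since a vertex set of $K_n$ has a common neighbor iff it is proper. Then $\Delta_G\cong S^{|V_G|-2}$, so $\tilde H_{|V_G|-0-2}(\Delta_G;k)\neq 0$ for every cograph with at least two vertices. This contradicts the case $i=0$. Correspondingly, $I_{\Delta_{K_n}}=(x_1\cdots x_n)$, and the generator $x_1x_2$ of $I_{\Delta_{K_2}}$ is not carried into $I_{\Delta_{K_n}}$ for $n\ge 3$.

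What the argument needs is precisely the hypothesis of your ``first attempt'': $\Delta_G=\Delta_{G'}|_{\phi(V_G)}$ for every full embedding $\phi$. Complexes of a hereditary graph property, such as independence and clique complexes, satisfy it. Under that hypothesis your proof goes through, with $n_i$ depending on $\Delta_\bullet$ as well as on $i$. The paper's one-line derivation tacitly relies on the same assumption.
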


\subsection{Variations on configuration space}

One of the original applications of the ``categorified" recursive classification of cographs was to show that graph configuration spaces of cographs had very regular behaviors in their homology groups \cite{KR}. This this section, we use the expanded methods of this work to prove similar statements for a number of variations on that theme.

\subsubsection{Anchored configuration spaces of cographs}

As our first variation, we considered the so-called anchored configuration spaces \cite{kozlov2023stirling,kozlov2022configuration,kozlov2024homology}. These spaces have seen uses in applied topology problems related to resource distribution and logistics \cite{kozlov2023stirling,hoekstra2025cup}.

\begin{definition}
 Let $G$ be a graph, considered as a 1-dimensional simplicial complex, and let $K$ be a finite set of vertices of $G$. Then the \textbf{$n$-pointed anchored configuration space of $G$ relative to $K$}, $\Sigma(G,K,n)$ is the subcomplex of $G^n$ comprised of all points $(x_1,\ldots,x_n)$ such that every element of $K$ appears at least once among the $x_i$.
\end{definition}

\begin{remark}
Note that, unlike traditional configuration spaces, points are allowed to collide in these anchored spaces. Indeed, otherwise the above would be equivalent to a typical configuration space of fewer points on the graph with $K$ removed. In this sense, they are similar to the graph configuration spaces with ``sinks" studied in \cite{chettih2018homology,ramos2019configuration}, which allow collisions at the vertex set of the underlying graph, though nowhere else.
\end{remark}

\begin{remark}
    In \cite{kozlov2024homology}, Kozlov introduced a slight variation of the above, wherein one only requires a fixed number of the elements of $K$ be always present among the configured points. These generalized anchored configuration spaces can also be treated with the methods of this work. We opt to only consider the usual anchored spaces for ease of exposition and for brevity.
\end{remark}

In order to apply our categorical methods to these kinds of anchored configuration spaces, we will need to slightly generalize the category we are looking at to incorporate the data of the set $K$.

\begin{definition}
    Let $r \geq 0$ be an integer. We will use $\Co_r$ to denote the category whose objects are pairs $(G,K)$ of a cograph along with a set $K$ of vertices of $G$ with $|K| = r$. The morphisms, $\phi:(G,K) \rightarrow (G',K')$, of this category are full embeddings $\phi:G \rightarrow G'$ with the added property that $\phi(K) = K'$.\\

    We will use $\Cot_r$ to denote the category whose objects are cotrees, with the extra label $K$, which is applied to exactly $r$ leaves. The morphisms of this category are label preserving homeomorphic embeddings. We can similarly define a category of planar cotrees equipped with the data of $K$.

    Defining $|(G,K)|$ to be the vertex set of $G$ turns $\Co_r$ into an $\FI$-concrete category as before. It is immediate that the category $\Cot_r$ is equivalent to $\Co_r$, and that this equivalence extends to an equivalence of $\FI$-concrete categories. It is also clear that the forgetful functor from planar cotrees with the data of $K$ to cotrees with the data of $K$ is essentially surjective and concrete.
\end{definition}

All of the above leads to the following very natural generalization of Theorem \ref{mainTech}.

\begin{theorem}\label{thm:anchorPoly}
    Let $k$ be a Noetherian ring, $r\geq 0$ an integer. If $V$ is a finitely generated module over $\Co_r$, then all of its submodules are also finitely generated. More generally, writing $A_{|\bullet|}$ for the polynomial ring over $\Co_r$, if $M$ is a finitely generated module over $A_{|\bullet|}$, then all submodules of $M$ are also finitely generated. 
\end{theorem}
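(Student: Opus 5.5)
The plan is to run the same strategy as for Theorem \ref{mainTech}: show that the planar version of $\Cot_r$ is a strongly G-category, and then apply Theorem \ref{thm:QGNoeth}. We already know that $\Co_r \simeq \Cot_r$ as $\FI$-concrete categories. Write $\PCot_r$ for planar cotrees carrying the extra $K$-label on exactly $r$ leaves, with morphisms the label preserving, planar-monotone homeomorphic embeddings. The forgetful functor $\PCot_r \to \Cot_r$ is essentially surjective and concrete, so once it is shown to be full up to the required sense, it is a strongly QG cover. Here "full" is in the same sense as in the unanchored case: the total ordering on leaves is chosen compatibly. The first statement, about $\Co_r$-modules, then follows from the second. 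A $\Co_r$-module becomes an $A_{|\bullet|}$-module by letting the variables act by zero, and finite generation is preserved.

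The conditions to check for $\PCot_r$ are the following.
\begin{enumerate}
\item Each leaf set carries a total order, namely the depth-first order, that is preserved by the morphisms. This holds by the definition of planar embeddings.
\item The Gröbner condition of \cite{sam2017grobner}: for each object $B$, the set of morphisms out of $B$ admits total orders compatible with post-composition. I would take the lexicographic order on the depth-first sequence of image vertices, exactly as in the unanchored planar case. The extra $K$-labels are only a restriction on which maps are allowed, so compatibility is inherited.
\item $\cM(\PCot_r)$ and $\cM(\PCot_r,B)$ are well-quasi-ordered.
\end{enumerate}

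For condition 3, encode a relatively weighted object $(C,\gamma,f\colon B\to C)$ as a planar rooted tree whose vertices carry labels. The label set is the product of three pieces: $\{0,1,l\}$; $\{K,\text{not }K\}$ on leaves; and $V_B\cup\{\emptyset\}$ to record the image of $f$. To these we adjoin a weight in $\mathbb{N}$ on leaves. This label set is a finite set, ordered by equality, times $\mathbb{N}$, so it is a well-quasi-order. Weighted morphisms are then exactly label-respecting planar homeomorphic embeddings. One caveat is that the constraint $\phi(K)=K'$ forces $|K|=|K'|=r$. Hence an embedding that sends $K$-leaves to $K$-leaves and non-$K$ leaves to non-$K$ leaves automatically satisfies $\phi(K)=K'$, because both sets have size $r$ and $\phi$ is injective. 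So the well-quasi-ordering follows from the labeled planar Kruskal theorem, via the same minimal bad sequence argument as in the proof for $\cM(\PCot,B)$.

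The main obstacle is making sure this minimal bad sequence argument is legitimate when objects must have exactly $r$ anchored leaves. The branches $B_{ij}$ of a tree in $\Cot_r$ carry fewer anchored leaves, so they are not objects of the same category. I would handle this by running Nash-Williams' argument in the larger class of all labeled planar cotrees, with an arbitrary number of $K$-labeled leaves. Kruskal/Higman applies there with the finite-times-$\mathbb{N}$ label set. The objects of $\PCot_r$ form a subclass of this class, and the restricted order on that subclass is exactly the weighted order of $\PCot_r$, by the cardinality remark above. A subset of a well-quasi-order is well-quasi-ordered, and Theorem \ref{thm:QGNoeth} then gives the theorem.
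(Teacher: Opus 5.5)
Your proposal is correct and follows the paper's proof. Like the paper, you reduce to showing that planar cotrees carrying the $K$-label form a strongly G-category, and you rerun the labeled planar Kruskal/Nash-Williams argument used for $\PCot$, with $|K|=|K'|=r$ and injectivity forcing $\phi(K)=K'$. Your choice to run the minimal bad sequence argument in the larger class of labeled planar cotrees with arbitrarily many $K$-leaves, and then restrict, handles explicitly the fact that branches carry fewer anchors, which the paper covers only with its remark that the extra label is applied to a fixed number of leaves.
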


\begin{proof}
It will suffice to show that the category of planar cotrees with the data of $K$ forms a strongly G-category. The proof here is identical to the proof that $\PCot$ was a strongly G-category, with the added dimension that our leaves might have the extra label $K$. Because there is only a single additional label that is always applied to a fixed (i.e. $r$) number of leaves, these data can be incorporated without breaking the necessary well-quasi-order properties that were used to prove that the category is a strongly G-category.
\end{proof}

We recover the following as a consequence:

\begin{corollary}\label{cor:anchor}
    Let $i,r,n \geq 0$ be fixed integers. Then the assignment,
    \[
    G \mapsto H_i(\Sigma(G,K,n);\Z)
    \]
    defines a finitely generated module over $\Co_r$. In particular,
    \begin{enumerate}
        \item For any $i,r,n \geq 0$, there exists a finite list of pointed cographs $\{(G_j,K_j)\}_{j = 1}^l$ with $|K_j| = r$, such that for any pointed cograph $(G,K)$ with $|K| = r$, the homology group $H_i(\Sigma(G,K,n);\Z)$ is generated by the homology classes of $H_i(\Sigma(G_j,K_j,n);\Z)$ pushed forward along full embeddings $(G_j,K_j)\hookrightarrow (G,K)$;
        \item For any $i,r,n \geq 0$, there exists an integer $d_{i,r,n} \geq 0$ such that for any pointed cograph $(G,K)$ with $|K| = r$, the exponent of the group $H_i(\Sigma(G,K,n);\Z)$ divides $d_{i,r,n}$;
        \item For any $i,r,n \geq 0$, and any field $k$, the function,
        \[
        m \mapsto \dim_k(H_i(\Sigma(K_m,[r],n);k))
        \]
        agrees with a polynomial for all $n \gg 0$.
    \end{enumerate}
\end{corollary}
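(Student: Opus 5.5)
The plan is to realize $(G,K)\mapsto H_i(\Sigma(G,K,n);\Z)$ as a subquotient of a finitely generated $\Co_r$-module and then apply the Noetherianity of Theorem \ref{thm:anchorPoly}. The consequences (1)–(3) are then read off.

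\textbf{Functoriality.} A morphism $\phi:(G,K)\to(G',K')$ in $\Co_r$ is a full embedding with $\phi(K)=K'$. It induces a cellular map $G^n\to (G')^n$. This map sends $\Sigma(G,K,n)$ into $\Sigma(G',K',n)$: if every element of $K$ occurs among the $x_j$, then every element of $K'=\phi(K)$ occurs among the $\phi(x_j)$. Hence both the cellular chain groups $C_i(\Sigma(G,K,n);\Z)$ and the homology groups are $\Co_r$-modules.

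\textbf{Finite generation of the chain modules.} The $i$-cells of $G^n$ are products $c_1\times\cdots\times c_n$ of vertices and edges with exactly $i$ edges among the $c_j$. The cell lies in $\Sigma$ exactly when every $k\in K$ equals some vertex factor $c_j$, since interior points of edges are never vertices. Each such cell is therefore determined by a bounded amount of data: at most $n+i\le 2n$ vertices of $G$, together with the relevant subset of $K$ and the pattern of which factors are edges. Consider the induced sub-pointed-graph on these vertices together with all of $K$; it has at most $2n+r$ vertices. Every cell is the pushforward of a cell on such a bounded pointed cograph. There are only finitely many isomorphism classes of these, so $C_i(\Sigma(-,-,n);\Z)$ is a finitely generated $\Co_r$-module.

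\textbf{Main step and main obstacle.} The cycles $Z_i\subseteq C_i$ form a submodule, so Theorem \ref{thm:anchorPoly} (with $k=\Z$) makes $Z_i$ finitely generated. Then $H_i=Z_i/B_i$ is a quotient of a finitely generated module and is finitely generated, which gives (1) directly. The step I expect to be delicate is checking that the cellular chain complex is natural. The maps must respect the orientations and signs of edges, so we should orient edges via the total order on vertices used for the G-cover, or work with unoriented chains up to sign consistently. It must also be clear that the anchoring condition is a subcomplex condition, so the boundaries stay inside $\Sigma$.

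\textbf{Consequences.} For (2), take the finitely many generators $(G_j,K_j)$. Each $H_i(\Sigma(G_j,K_j,n);\Z)$ is a finitely generated abelian group with some torsion exponent $e_j$. The torsion of any $H_i(\Sigma(G,K,n))$ is generated by images of these groups, so a bound on its exponent follows. One subtlety is that torsion in the target can come from free classes of the generators modulo relations. To handle this, apply Noetherianity to the torsion submodule of $H_i$ and take $d_{i,r,n}$ to be the lcm of the exponents of its finitely many generating groups. For (3), restrict to the subcategory of complete graphs $K_m$ with anchor $[r]$. The full embeddings between these form a copy of $\FI$ relative to a fixed $r$-set, and restriction preserves finite generation: any embedding of a generator into $K_m$ factors through a complete graph, because induced subgraphs of complete graphs are complete. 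Polynomiality in $m$ for $m\gg0$ then follows from the standard result that finitely generated $\FI$-modules over a field have eventually polynomial dimension. The statement's ``$n\gg0$'' is read as $m\gg0$.
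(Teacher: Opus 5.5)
Your proposal is correct and follows the paper's route: show the cubical $i$-chain module is finitely generated, and use Noetherianity of $\Co_r$-modules (Theorem \ref{thm:anchorPoly}) to pass to the subquotient $H_i$. For part (3) you, like the paper, restrict to the complete graphs $(K_m,[r])$ and apply polynomial dimension growth for finitely generated $\FI$-modules. You are slightly more careful than the paper in two places: you bound the chains of $\Sigma(G,K,n)$ directly, where the anchors are automatically among the vertex factors, instead of embedding into the chains of $G^n$; and your torsion-submodule argument makes (2) explicit, correctly accounting for free classes that map to torsion, whereas the paper leaves (2) implicit.
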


\begin{proof}
    Let $C_{i,r,n}(G,K)$ denote the free abelian group of (cubical) $i$-chains of $\Sigma(G,K,n)$. It will suffice to show that this is finitely generated as a $\Co_r$-module. Note that this $\Co_r$-module is a submodule of the cubical $i$-chains of the entire product $G^n$. This latter module can be seen to be finitely generated by the collection of pointed graphs with at most $n-i+2i = n+i$ vertices.

    For the third statement, first note that the homeomorphism type of the anchored configuration space on $n$ points on any complete graph $K_m$ only depends on the number of points in the configuration space, and the size of the anchor set. Let $\FI_{+r}$ be the category whose objects are sets of the form $[r+n] = \{1,\ldots,r,r+1,\ldots,r+n\}$, and whose morphisms are injections which fix $1,\ldots,r$. Then for any $r,n \geq 0$,
    \[
    [m] \mapsto H_i(\Sigma(K_m,[r],n);k)
    \]
    is seen to be a module over $\FI_{+r}$. This module will be finitely finitely generated by the prior parts of this corollary, by restricting to the full subcategory comprised of only the complete graph and anchor set equal to $[r]$. To conclude the proof, we observe that $\FI_{+r}$ is equivalent to $\FI$, and apply the polynomial dimension growth theorem for $\FI$-modules \cite{church2014fi}.
\end{proof}

The question of integral torsion in the more typical configuration spaces of graphs is still a hotly researched one. For example, while it has been known that the first integral homology groups of \emph{unordered} configurations can have torsion (and this torsion actually detects planarity of the graph) for at least a decade \cite{ko2012characteristics}, there has still not been confirmation whether torsion can appear in any other homology group. In fact, arguably the most significant outstanding question in the theory of graph configuration spaces is whether or not it is possible for there to ever be \emph{odd} torsion in their homology groups. In the case of \emph{ordered} configurations, it is conjectured that there cannot be torsion, but it is not known in virtually all cases (See \cite{hainaut2025representation} on the most up to date theorems about torsion in ordered configuration spaces of graphs).

To date, it is not known whether or not anchored configuration spaces can admit integral torsion in their homology groups. Just as with standard configuration spaces, it was shown that the homology groups of anchored trees are always torsion free \cites{chettih2018homology,kozlov2023stirling}. It was later shown that the same is true in the cases where the graph is a cycle \cite{kozlov2022configuration}. Outside of trees and the cycle, however, there have not been nearly enough computations completed in the homology groups of anchored configuration spaces to say one way other the other whether torsion can appear. Despite this, the above theorem implies that the kinds of torsion that can appear, at least in the cases of cographs, must be universally bounded for each fixed $r,i,n \geq 0$.

In \cite{kozlov2023stirling}, where the study of anchored configuration spaces was initiated, Kozlov provides a classification of the homotopy types of anchored configuration spaces of trees. In that work it is noted that the homotopy type of the space depended on very little information, the number of points being configured and the number of vertices in the tree. The first part of Corollary \ref{cor:anchor} suggests that a similar phenomenon can likely be described for cographs, atleast at the level of the homology groups. Namely, there can only be a finite amount of data that any homology group can possibly depend on.

While the proof of \ref{cor:anchor} only requires a Noetherianity statement at the level of a particular category's representations, Theorem \ref{thm:anchorPoly} gives us the strictly stronger Noetherianity statement for modules over a categorical polynomial ring. It therefore becomes natural to ask whether this extra structure can be used to more deeply study the homology groups of anchored configuration spaces. Indeed, because our spaces allow for collisions, one has natural maps associated to each vertex of a given graph $G$,

\[
\Sigma(G,K,n) \stackrel{\cdot x_v}{\rightarrow} \Sigma(G,K,n+1),
\]

given by adding a particle to the specified vertex. The issue with this, however, is that there is no natural way to account for the order of the points. In other words, the proper action here would not be by the polynomial ring over $\Co$, but rather some kind of \emph{non-commutative} polynomial ring over $\Co$. Something similar to this was accomplished for graphical configuration spaces with sinks in \cite{ramos2019configuration}, and for configurations of disks in an infinite strip in \cite{alpert2024configuration,wawrykow2024representation}.

Of course, one loses this need for order preservation if there isn't an ordering on the points to begin with! In other words, writing
\[
\sigma(G,K,n) := \Sigma(G,K,n)/S_n,
\]
the assignment,
\[
(G,K) \mapsto H_i(\sigma(G,K,n))
\]
does define a module over the polynomial ring over $\Co_r$. Note that, unlike in the traditional configuration space setting, the action of the symmetric group by permuting points is not properly discontinuous (in fact, it isn't even free). This suggests that the spaces $\sigma(G,K,n)$ are slightly more topologically fraught. Indeed, to the knowledge of the authors, there has yet to be any serious investigation into these spaces.

\subsubsection{Cographical configuration spaces}

The second variation on classical configuration spaces that we will concern ourselves with are graphical hyperplane arrangements.

\begin{definition}
    Given a graph $G$ with vertex set $V$ and edge set $E$, we define the associated \textbf{graphical hyperplane arrangement} \footnote{The phrase ``hyperplane arrangement" in this context will always refer to an arrangement which is complex, and central. In other words, we will assume that our hyperplanes all intersect at the origin.} as follows. The ambient space of our arrangement is the vector space $\mathcal{V}_G = \C^{V_G}$, whereas the hyperplanes are those defined by the equations $x_v = x_w$, whenever $v \sim w$ in $G$.
\end{definition}

\begin{example}
    If $G = K_n$ is the complete graph, then this hyperplane arrangement is the famous \textbf{braid arrangement}. On the other hand if $G = K_{a,b}$ is the complete bipartite graph, what results is sometimes referred to as a \textbf{colored braid arrangment} \cite{farb2019coincidences}.
\end{example}

Given a complex hyperplane arrangement there are a number of ways to assign meaningful geometry (see \cite{huh2022combinatorics} for a very recent survey on the matter). In this paper we will largely stick to the first, and most natural construction, the complementary space.

\begin{definition}
    For a given the graphical hyperplane arrangement $\mathcal{A}_G$, we define its \textbf{complementary space} to be the complex variety defined by
    \[
    \mathcal{M}(G) = \{(x_v)_{ v \in V_G} \mid x_v \neq x_w \text{ if $v \sim w$}\}
    \]
\end{definition}

\begin{example}
    The complementary space of the braid arrangement is configuration space on $n$ points in the plane. The complementary space of the colored braid arrangements are the colored configuration spaces considered in \cite{farb2019coincidences}.

    Configuration space of the plane has been studied since the work of Arnold \cite{arnold2013cohomology}. These spaces are also the motivating example behind \textbf{representation stability}, and the theory of \textbf{FI-modules} \cites{church2013representation,churchFIMod}. One of the primary results of this paper will show that the complements of hyperplane arrangements show finitely generated behaviors across \emph{all} cographs, not just the complete graph and complete bipartite graph cases which were known from those works.
\end{example}

Important for us will be the following celebrated theorem of Orlik and Solomon from \cite{orlik1980combinatorics}.

\begin{definition}
    Let $\mathcal{A}_G$ denote the graphical hyperplane arrangement associated to a graph $G$, and let $\mathcal{E}_G$ denote the exterior algebra of the vector space whose basis is in bijection with the hyperplanes of $\mathcal{A}_G$, $<e_H>$. For any collection of hyperplanes in $\mathcal{A}_G$ $H_1,\ldots,H_n$, we set
    \[
    e_S = e_{H_1} \wedge e_{H_2} \wedge \ldots \wedge e_{H_n}.
    \]
    We say that a collection of hyperplanes in $\mathcal{A}_G$ is \textbf{dependent} if the normal vectors to these hyperplanes are linearly dependent in the usual sense.
    
    The algebra $\mathcal{E}_G$ can be given the structure of a d.g. algebra by defining the boundary map
    \[
    \partial(e_S) = \sum_i (-1)^i e_{S - \{H_i\}}
    \]

    The Orlik-Solomon algebra $OS(G)$ is defined to be the quotient of $\mathcal{E}_G$ by the ideal generated by $\partial e_S$, where $S$ is a dependent collection of hyperplanes.
\end{definition}

\begin{theorem}\label{thm:OSalg}
Let $\mathcal{A}(G)$ be a graphical hyperplane arrangement. Then there is an isomorphism of graded algebras
\[
H^\star(\mathcal{M}(G);\Z) \cong OS(G)
\]
\end{theorem}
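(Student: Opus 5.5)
This is the Orlik--Solomon theorem from \cite{orlik1980combinatorics}, specialized to graphical arrangements. The plan is therefore to argue for general central complex arrangements and then specialize, rather than to prove anything cograph-specific. For graphical arrangements, a collection of hyperplanes $\{x_v = x_w\}$ is dependent exactly when the corresponding edges contain a cycle of $G$. So the relations $\partial e_S$ reduce to the classical cycle relations, and the minimal ones come from induced cycles.

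\textbf{Step 1: the map.} Define an algebra map $\mathcal{E}_G \to H^\star(\mathcal{M}(G);\Z)$ by sending $e_H$, for $H = \{x_v = x_w\}$, to the class of the closed form
\[
\omega_H = \frac{1}{2\pi i}\, d\log(x_v - x_w).
\]
Equivalently, it is the pullback of the generator of $H^1(\C^\times;\Z)$ along $x_v - x_w$. These classes are integral, and their products are graded-commutative. We then check that $\partial e_S$ maps to zero for dependent $S$. It suffices to treat minimal dependent sets, i.e.\ cycles $v_1,\ldots,v_m$. There the linear forms $\alpha_j = x_{v_j} - x_{v_{j+1}}$ satisfy $\sum_j \alpha_j = 0$, and the standard identity $\sum_i (-1)^i \prod_{j\ne i} d\log\alpha_j = 0$ holds at the level of differential forms. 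This is Arnold's relation for $m=3$, and it extends to general $m$ by induction, writing one form as a combination of the others. Hence the map factors through $OS(G)$.

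\textbf{Step 2: bijectivity, by deletion--restriction.} Fix a hyperplane $H_0$ of the arrangement $\mathcal{A}$. Let $\mathcal{A}' = \mathcal{A} \setminus \{H_0\}$ be the deletion and $\mathcal{A}''$ the restriction to $H_0$; graphically these are deletion and contraction of the edge. Then $\mathcal{M}(\mathcal{A}) = \mathcal{M}(\mathcal{A}') \setminus (H_0 \cap \mathcal{M}(\mathcal{A}'))$, where $H_0 \cap \mathcal{M}(\mathcal{A}') \cong \mathcal{M}(\mathcal{A}'')$ is a smooth closed complex hypersurface of real codimension $2$. The Thom--Gysin sequence
\[
\cdots \to H^p(\mathcal{M}') \to H^p(\mathcal{M}) \to H^{p-1}(\mathcal{M}'') \to H^{p+1}(\mathcal{M}') \to \cdots
\]
needs to be compared with the algebraic short exact sequence $0 \to OS(\mathcal{A}') \to OS(\mathcal{A}) \to OS(\mathcal{A}'')[-1] \to 0$, in which the second map is ``divide by $e_{H_0}$ and restrict''. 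We then prove inductively on $|\mathcal{A}|$ that the topological sequence splits into short exact sequences. The key points are that $H^\star(\mathcal{M}') \to H^\star(\mathcal{M})$ is injective, and that the residue map is compatible with the algebraic one. The base case is the empty arrangement, where $\mathcal{M}$ is contractible, and the five lemma finishes the induction. Integrality comes along for free once we know $OS$ is torsion-free. This follows because it has the broken-circuit basis: for graphs, the monomials $e_S$ with $S$ a forest containing no broken circuit.

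\textbf{Main obstacle.} The hardest step is exactness of the algebraic deletion--restriction sequence, in particular injectivity of $OS(\mathcal{A}') \to OS(\mathcal{A})$. I would handle it with the nbc basis, checking by straight-line ordering (with $H_0$ last) that nbc sets of $\mathcal{A}$ split into nbc sets of $\mathcal{A}'$ and $H_0$ times nbc sets of $\mathcal{A}''$. This matches the deletion--contraction recursion for the chromatic/characteristic polynomial. Ranks then agree, and surjectivity of the comparison map onto the generators closes the argument.
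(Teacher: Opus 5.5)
The paper gives no proof of this statement; it is quoted from Orlik--Solomon \cite{orlik1980combinatorics} and used as a black box. Your outline is the standard Brieskorn/Orlik--Solomon argument, and its overall structure is right, but one step rests on the wrong input.

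In Step 1 you check the circuit relations at the level of differential forms. That only shows $\partial e_S$ maps to $0$ in $H^\star(\mathcal{M}(G);\C)$, i.e.\ to a torsion class in $H^\star(\mathcal{M}(G);\Z)$. So ``hence the map factors through $OS(G)$'' does not follow over $\Z$ at that point. Torsion-freeness of $OS(G)$, which you invoke, does not help. What you need is torsion-freeness of $H^\star(\mathcal{M}(G);\Z)$, so that it embeds in de~Rham cohomology, together with integral surjectivity of $\mathcal{E}_G\to H^\star(\mathcal{M}(G);\Z)$. Nothing about $OS$ gives either.

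The fix is to run the topological induction first. Take as inductive hypothesis: $H^\star(\mathcal{M};\Z)$ is free, generated as a ring by the classes $\omega_H$, and of rank equal to the number of nbc sets in each degree.

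\begin{itemize}
\item The residue map is surjective. For $K_j\in\mathcal{A}''$, $\omega_{K_1}\cdots\omega_{K_p}$ is, up to sign, the residue of $\omega_{H_0}\omega_{H_1}\cdots\omega_{H_p}$ for any $H_j\in\mathcal{A}'$ with $H_j\cap H_0=K_j$.
\item Hence the Gysin connecting map vanishes and $H^\star(\mathcal{M}')\to H^\star(\mathcal{M})$ is injective. This is where your ``key point'' actually comes from.
\item So $H^\star(\mathcal{M};\Z)$ is an extension of free groups, is generated by the $\omega_H$, and its ranks satisfy the same recursion as nbc sets, with $H_0$ last and $\mathcal{A}''$ ordered by minimal lifts.
\end{itemize}

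Only now does the de~Rham computation give the relations integrally, so the map factors through $OS(G)$ over $\Z$.

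This also removes your ``main obstacle'': you never need exactness of the algebraic deletion--restriction sequence. Citing the nbc basis there would just move the difficulty, since its independence half is the real content. Instead, the nbc monomials span $OS(G)$ by the elementary rewriting with circuit relations. So in each degree the composite $\Z^{\#\mathrm{nbc}}\to OS(G)\to H^\star(\mathcal{M}(G);\Z)$ is a surjection between free abelian groups of the same finite rank, hence an isomorphism. This gives at once the nbc basis, torsion-freeness of $OS(G)$, and the theorem.
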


\begin{remark}
    One other topic in the geometry of matroids that has seen approaches using representations of categories is the \textbf{Kazhdan–Lusztig} polynomial of the matroid \cites{elias2016kazhdan,proudfoot2017configuration}. In \cite{proudfoot2017configuration} it is shown that these coefficients have a form of finite generation with respect to the category of finite sets and surjections in the case of the braid matroid. Being that the braid matroid is a special case of a graphical matroid of a cograph, it is natural to ask whether those results can be lifted to this level of generality. Unfortunately this is not the case. The primary machinery that allows the work of \cite{proudfoot2017configuration} to function is a spectral sequence relating the Kazhdan–Lusztig polynomial of a hyperplane arrangement to that of a contraction. In the case of the complete graph, any surjection $[a] \twoheadrightarrow [b]$ induces an edge contraction turning the complete graph $K_b$ into $K_a$ (up to removal of multi-edges and loops). In the case of general cographs, however, there is no clear functoral way to realize an induced subgraph as an edge contraction of a larger cograph. 
    
    In \cite{flynn2024representation}, Flynn, the third author, and Young illustrated how the results of \cite{proudfoot2017configuration} on the Kazhdan-Lusztig coefficients of the braid arrangement seem to be quite difficult to generalize.
\end{remark}

All of the above background will now allow us to prove the following theorem, which generalizes and expands upon the seminal representation stability results of the braid and colored braid arrangements \cite{church2013representation,farb2019coincidences}.

\begin{corollary}
    Let $\mathcal{A}(G)$ denote the graphical hyperplane arrangement with complement space $\mathcal{M}(G)$. Then for any fixed $i \geq 0$, the assignment,
    \[
    G \mapsto H^i(M(G);\Z)
    \]
   defines a finitely generated $\Co$-module.
\end{corollary}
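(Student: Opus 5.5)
We will show that $G \mapsto H^i(\mathcal{M}(G);\Z)$ is a subquotient of a finitely generated $\Co$-module, and then invoke Noetherianity (Theorem \ref{coNoeth}, or Theorem \ref{mainTech} with the variables acting trivially).

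The first step is to make the assignment functorial on $\Co$. A full embedding $\phi: G \hookrightarrow G'$ induces a coordinate projection $\C^{V_{G'}} \to \C^{V_G}$. Because $\phi$ is full, $v \sim w$ in $G$ exactly when $\phi(v)\sim\phi(w)$ in $G'$. So a point of $\mathcal{M}(G')$ projects to a point of $\mathcal{M}(G)$, and we obtain a continuous map $\mathcal{M}(G') \to \mathcal{M}(G)$. Pulling back along it gives $H^i(\mathcal{M}(G);\Z) \to H^i(\mathcal{M}(G');\Z)$, covariant in $G$. This makes $G\mapsto H^i(\mathcal{M}(G);\Z)$ a $\Co$-module.

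Under Theorem \ref{thm:OSalg} this map corresponds to the algebra map $OS(G)\to OS(G')$ sending $e_{H_{vw}}\mapsto e_{H_{\phi(v)\phi(w)}}$. The reason is that the Arnold–Brieskorn class $e_H = \frac{1}{2\pi i}d\log(x_v-x_w)$ pulls back to the class of the same form. Dependence of hyperplanes (cycles in the graph) is preserved by $\phi$, so the map respects the Orlik–Solomon relations and is well defined.

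Next, the degree-$i$ piece $OS^i(G)$ is a quotient of $\bigwedge^i \Z E_G$, and this quotient is natural in $G$. So it suffices to show that $G \mapsto \bigwedge^i \Z E_G$ is a finitely generated $\Co$-module; the paper already asserts this for edge sets. Concretely, $\bigwedge^i\Z E_G$ is spanned by wedges of $i$ edges. Any such wedge is the image of a class living on the induced subgraph spanned by the at most $2i$ endpoints of those edges. There are finitely many cographs with at most $2i$ vertices, and together they generate. A quotient of a finitely generated module is finitely generated, so $G\mapsto OS^i(G)\cong H^i(\mathcal{M}(G);\Z)$ is finitely generated. In fact Noetherianity is not even needed at this step.

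I expect the main obstacle to be the naturality check: verifying that the isomorphism of Theorem \ref{thm:OSalg} commutes with the maps induced by full embeddings, so that the $\Co$-module structure on cohomology matches the combinatorial one on $OS$. I would handle this through the explicit de Rham/Arnold generators above. It is also worth recording, as a possibly cleaner alternative, that $OS^i(G)$ is free abelian and spanned by no-broken-circuit monomials, each supported on at most $2i$ vertices. That gives a direct finite generating set without even passing through $\bigwedge^i$.
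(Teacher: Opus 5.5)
Your proposal is correct and follows the same route as the paper. You identify $H^i(\mathcal{M}(G);\Z)$ with $OS^i(G)$, view it as a natural quotient of $\bigwedge^i \Z E_G$, and use that the latter is finitely generated (by cographs on at most $2i$ vertices), so, as you note, Noetherianity is not actually needed; beyond the paper, your construction of the $\Co$-action via the coordinate projections $\mathcal{M}(G') \to \mathcal{M}(G)$ and your check that the Orlik--Solomon isomorphism is natural fill in a step the paper leaves implicit.
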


\begin{proof}
    Theorem \ref{thm:OSalg} tells us that these cohomologies are natural subquotients of wedge powers of the linearization of the edge set. We have already seen that these wedge-powers must all be finitely generated, and therefore the same can be said of their quotients.
\end{proof}

\bibliography{bibliography}
\bibliographystyle{amsalpha}

\end{document}